\def\blfootnote{\xdef\@thefnmark{}\@footnotetext}
\newlength\Textht
\newtheorem{thm}{Theorem}[section]
\newtheorem*{thm*}{Theorem}
\newtheorem{cor}[thm]{Corollary}
\newtheorem{lem}[thm]{Lemma}
\theoremstyle{definition}
\newtheorem{dfn}[thm]{Definition}
\newtheorem{rem}[thm]{Remark}
\newcommand\ZZ{\mathbb{Z}}
\newcommand{\N}{{\mathbb N}}
\newcommand{\GL}{\operatorname{GL}}
\newcommand{\Aut}{\operatorname{Aut}}
\newcommand{\End}{\operatorname{End}}
\newcommand{\BS}{\operatorname{BS}}
\newcommand{\Fix}{\operatorname{Fix}}
\newcommand{\stab}{\operatorname{Stab}}
\newcommand{\lcm}{\operatorname{lcm}}
\keywords{Baumslag--Solitar group, fixed subgroups, stabilizers}
\thanks{ }
\subjclass{}
\title{On fixed points and stabilizers in solvable Baumslag--Solitar groups}
\author{Oorna Mitra}
\address{Indian Institute of Science Education and Research (IISER) Kolkata, Mohanpur, Nadia - 741246, West Bengal, India.}
\email{urna.mitra@gmail.com}
\author{Ramya Nair}
\address{The Institute of Mathematical Sciences, CIT Campus, Tharamani, Chennai-600113, Tamil Nadu, India. }
\email{ramyanair59@gmail.com}
\begin{document}

\begin{abstract}
    In this article, we study the fixed-point subgroups of the solvable Baumslag-Solitar groups $\BS(1,n)= \langle a, t \mid t a t^{-1} = a^{n} \rangle$, $n>1$ of automorphisms and endomorphisms. We also investigate the stabilizers of subgroups of $\BS(1,n)$, considered as subgroups of the group of automorphisms  and submonoids of the monoid of endomorphisms of $\BS(1,n)$. We show that the fixed-point subgroups of automorphisms are either infinite cyclic (in which case, a generator is computable), or they are equal to $\mathbb{Z}\left[\tfrac{1}{n}\right]$, an infinitely generated abelian group. We further prove that the stabilizer subgroup of an element in $\BS(1,n)$ is either a finitely generated abelian group whose rank equals the number of distinct prime divisors of $n$ (and in this case, a finite generating set is computable), or it is $\mathbb{Z}\left[\tfrac{1}{n}\right]$. As a corollary, we show that for all $k \in \mathbb{N}$, every element of $\BS(1,n)$ has a unique $k$-th root. We then proceed to examine the behaviour of fixed-point subgroups and stabilizers under endomorphisms and find similar results. We prove that the fixed point subgroups of endomorphisms are again infinite cyclic or $\mathbb{Z}\left[\tfrac{1}{n}\right]$, but the stabilizer submonoids are always infinitely generated.
\end{abstract}
\maketitle
\section{Introduction}

Given a group $G$ and an automorphism (or endomorphism) $\varphi$ of $G$, the fixed-point subgroup is defined by $\Fix(\varphi)=\{g \mid \varphi(g)=g\},$ which is a subgroup of $G$. The study of fixed points became prominent for the first time in the context of free groups. In 1975, Dyer and Scott \cite{DS} formulated what became known as the Scott Conjecture, which states that the rank of the fixed-point subgroup of an automorphism of a finitely generated free group is bounded above by the rank of the group itself. This conjecture was later proved by Bestvina and Handel \cite{BH} in 1992. Since then, fixed-point subgroups have been investigated in a variety of settings, particularly in groups with negative curvature, including hyperbolic \cite{WN} and relatively hyperbolic groups \cite{AD}. More recently, fixed-point subgroups have also been studied in groups without negative curvature, such as Artin groups \cite{Oli}. Various properties of fixed subgroups of generalized Baumslag-Solitar groups have been investigated by Jones and Logan \cite{JL}. For results on fixed subgroups in other classes of groups, see \cite{JWWZ}, \cite{LZ}, \cite{RV}, \cite{Syk}, \cite{WZ}.

Given a group $G$, $\Aut(G)$ denotes the group of automorphisms of $G$ and $\End(G)$ denotes the monoid of endomorphisms of $G.$
Given an element $g \in G,$ we define the stabilizer $\stab(g)=\{\varphi \in \Aut(G) \mid \varphi(g)=g\}$ of $g$, which turns out to be a subgroup of $\Aut(G).$ We also define the endo-stabilizer $e\text{-}\stab(g)=\{\varphi \in \End(G) \mid \varphi(g)=g\}$ of $g$, which is a submonoid of $\End(G).$

In this paper, we describe the fixed subgroups of automorphisms and endomorphisms in $\BS(1,n).$ We also describe the stabilizers of elements of $\BS(1,n)$, considered as subgroups of the group of automorphisms  and submonoids of the monoid of endomorphisms of $\BS(1,n)$. We heavily rely on the description of automorphisms and endomorphisms of $\BS(1,n), n >2$ given by O'Niell \cite{ON}. The techniques involved in our proofs are algebraic and combinatorial in nature.

The paper is organized as follows. 

In Section~\S2, we begin by recalling several equivalent descriptions of the groups $\BS(1,n)$ and describe the automorphisms and endomorphisms of $\BS(1,n)$, as given by O'Niell \cite{ON}. We also present various descriptions of the automorphism group $\Aut(\BS(1,n))$, following \cite{MRV}. 

In Section~\S3, we study $\Fix(\varphi),$ for $\varphi \in \Aut((\BS(1,n))$ and $\stab(g),$ for $g \in \BS(1,n).$ The main theorems of this section are as follows.
\begin{thm} 
    Let $\varphi\in \Aut(\BS(1,n))$ be a non-trivial automorphism, then either $\Fix\varphi\cong\ZZ$ or $\Fix\varphi\cong \ZZ \left[\tfrac{1}{n}\right]$. When $\Fix\varphi\cong\ZZ$, its generator is computable.
\end{thm}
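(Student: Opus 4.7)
The plan is to apply the explicit description of $\Aut(\BS(1,n))$ recalled in Section~\S2: every non-trivial $\varphi\in\Aut(\BS(1,n))$ has the form $\varphi(a)=a^u$, $\varphi(t)=a^c t$ with $u\in\ZZ\left[\tfrac{1}{n}\right]^\times$, $c\in\ZZ\left[\tfrac{1}{n}\right]$, and $(c,u)\neq(0,1)$. Each element of $\BS(1,n)$ has a unique normal form $a^r t^m$ with $r\in\ZZ\left[\tfrac{1}{n}\right]$ and $m\in\ZZ$, and a short induction on $|m|$ gives $\varphi(a^r t^m)=a^{ur+c\tau_m}\,t^m$, where $\tau_m:=(n^m-1)/(n-1)\in\ZZ\left[\tfrac{1}{n}\right]$. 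Consequently, $a^r t^m\in\Fix\varphi$ if and only if $(u-1)r=-c\,\tau_m$, and I would split the analysis according to whether $u=1$.

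If $u=1$, then non-triviality forces $c\neq 0$, so the fixed-point equation reduces to $\tau_m=0$, i.e., $m=0$, with every $r\in\ZZ\left[\tfrac{1}{n}\right]$ admissible. Hence $\Fix\varphi=\{a^r:r\in\ZZ\left[\tfrac{1}{n}\right]\}\cong\ZZ\left[\tfrac{1}{n}\right]$. If $u\neq 1$, then $r$ is determined by $m$: setting $\alpha:=-c/((n-1)(u-1))\in\QQ$ yields $r_m=\alpha(n^m-1)$, and therefore $\Fix\varphi=\{a^{r_m}t^m:m\in M\}$ where $M:=\{m\in\ZZ:r_m\in\ZZ\left[\tfrac{1}{n}\right]\}$. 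A direct check of the cocycle identity $r_{m+m'}=r_m+n^m r_{m'}$ shows that $M$ is a subgroup of $\ZZ$ and that the assignment $m\mapsto a^{r_m}t^m$ is a group isomorphism from $M$ onto $\Fix\varphi$.

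The core remaining step is to show $M=d\ZZ$ for a positive, computable integer $d$. Writing $\alpha=A/B$ in lowest terms and factoring $B=B_1 B_2$ with every prime of $B_1$ dividing $n$ and $\gcd(B_2,n)=1$, one verifies that $\alpha(n^m-1)\in\ZZ\left[\tfrac{1}{n}\right]$ if and only if $B_2\mid(n^m-1)$, i.e., $n^m\equiv 1\pmod{B_2}$. Since $\gcd(n,B_2)=1$, this cuts out the subgroup $d\ZZ\subseteq\ZZ$ with $d=\ord_{B_2}(n)$, which is finite and algorithmically computable; the degenerate subcase $c=0$, $u\neq 1$ gives $\alpha=0$, $d=1$, and generator $t$. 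Thus $\Fix\varphi\cong\ZZ$ with computable generator $a^{r_d}t^d$. I expect the most delicate point to be the cocycle verification together with the denominator analysis that produces $d$; the remaining bookkeeping is straightforward.
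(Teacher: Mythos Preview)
Your proposal is correct and follows essentially the same approach as the paper: both derive the fixed-point equation $(u-1)r=-c\,\tau_m$, split into the cases $u=1$ (giving $\ZZ[\tfrac1n]$) and $u\neq1$ (where the $t$-exponent determines the element, so $\Fix\varphi$ embeds in $\ZZ$), and then reduce membership to a congruence $n^m\equiv1$ modulo an integer coprime to $n$. Your endpoint is slightly cleaner---you identify the generator directly as $d=\ord_{B_2}(n)$ via the lowest-terms denominator of $\alpha$, whereas the paper produces one admissible exponent $c_0$ and then searches its divisors---but the underlying argument is the same.
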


\begin{thm} 
    Let $g$ be a non-trivial element of $\BS(1,n)$. Then  either $\stab(g) \cong \mathbb{Z}\left[\frac{1}{n}\right]$, or $\stab(g)$ is a finitely generated abelian group with rank equals to the number of distinct primes present in $n$ and a finite set of generators is computable.
\end{thm}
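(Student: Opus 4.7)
The plan is to start from the description of $\Aut(\BS(1,n))$ recalled in Section~\S2: every automorphism has the form $\varphi_{u,v}\colon a\mapsto a^{u},\ t\mapsto a^{v}t$ with $u\in\ZZ\left[\tfrac{1}{n}\right]^{*}$ and $v\in\ZZ\left[\tfrac{1}{n}\right]$. Using the normal form $\BS(1,n)\cong\ZZ\left[\tfrac{1}{n}\right]\rtimes\ZZ$, I write the non-trivial element as $g=a^{x}t^{k}$ with $x\in\ZZ\left[\tfrac{1}{n}\right]$ and $k\in\ZZ$. A short induction in the semidirect product yields
\[
\varphi_{u,v}(a^{x}t^{k}) \;=\; a^{\,ux+v\sigma_{k}}\,t^{k},\qquad \sigma_{k}\coloneqq\frac{n^{k}-1}{n-1}\in \ZZ\left[\tfrac{1}{n}\right],
\]
so $\varphi_{u,v}(g)=g$ reduces to the single linear equation $(1-u)\,x=v\,\sigma_{k}$.

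If $k=0$, then $x\neq 0$ and the equation forces $u=1$ while $v$ is free, giving
\[
\stab(g)\;=\;\{\varphi_{1,v}\mid v\in\ZZ\left[\tfrac{1}{n}\right]\}\;\cong\;\ZZ\left[\tfrac{1}{n}\right].
\]
If $k\neq 0$, then $\sigma_{k}$ is a non-zero-divisor (it differs from the positive integer $\sigma_{|k|}=1+n+\cdots+n^{|k|-1}$ by a unit in $\ZZ\left[\tfrac{1}{n}\right]$, and $\sigma_{|k|}\equiv 1\pmod{n}$, so $\gcd(\sigma_{|k|},n)=1$). Consequently $v$ is uniquely determined by $u$, and the map $\varphi_{u,v}\mapsto u$ embeds $\stab(g)$ as a subgroup $H$ of $\ZZ\left[\tfrac{1}{n}\right]^{*}\cong\{\pm 1\}\times\ZZ^{r}$, where $r$ is the number of distinct prime divisors of $n$. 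This already forces $\stab(g)$ to be finitely generated abelian of rank at most $r$.

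To pin down the exact rank and extract a computable generating set, I convert the $\ZZ\left[\tfrac{1}{n}\right]$-divisibility condition $(1-u)x/\sigma_{k}\in\ZZ\left[\tfrac{1}{n}\right]$ into a genuine integer congruence. Writing $x=x_{0}/n^{s}$ with $x_{0}\in\ZZ$ and setting $M=\sigma_{|k|}/\gcd(\sigma_{|k|},x_{0})$, the condition on $u$ becomes $u\equiv 1\pmod{M}$ inside $(\ZZ/M\ZZ)^{*}$; this makes sense because $\gcd(M,n)=1$, so every prime $p_{i}\mid n$ is a unit modulo $M$. Thus $H$ is the kernel of the natural homomorphism $\ZZ\left[\tfrac{1}{n}\right]^{*}\to(\ZZ/M\ZZ)^{*}$. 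For each prime divisor $p_{i}$ of $n$ the integer $N_{i}=\ord_{M}(p_{i})$ is computable and $p_{i}^{N_{i}}\in H$; the elements $p_{1}^{N_{1}},\ldots,p_{r}^{N_{r}}$ are $\ZZ$-linearly independent in $\ZZ\left[\tfrac{1}{n}\right]^{*}$, so $H$ has rank exactly $r$. A finite generating set of $H$ is then produced effectively by a Smith-normal-form (or Hermite-reduction) computation on the finite-index kernel inside $\ZZ^{r}$, and each generator $u\in H$ lifts uniquely to $\varphi_{u,(1-u)x/\sigma_{k}}\in\stab(g)$.

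The main obstacle I anticipate is precisely the reduction in the last paragraph: encoding the $\ZZ\left[\tfrac{1}{n}\right]$-divisibility by $\sigma_{k}$ as a clean integer congruence modulo the right $M$, and then confirming that the resulting congruence subgroup $H\le\ZZ\left[\tfrac{1}{n}\right]^{*}$ admits an explicit, algorithmically producible basis. Once this reduction is in hand, both the rank computation and the computability claim follow routinely from the structure of $\ZZ\left[\tfrac{1}{n}\right]^{*}$ and elementary lattice arithmetic.
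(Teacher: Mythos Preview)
Your argument is correct and follows essentially the same route as the paper. Both proofs start from the fixed-point equation $(1-u)x=v\sigma_k$, split on $k=0$ versus $k\neq 0$, observe that for $k\neq 0$ the value of $v$ is determined by $u$ so that $\stab(g)$ embeds in $\ZZ\!\left[\tfrac{1}{n}\right]^{*}$, and then reduce the membership condition to the congruence $u\equiv 1\pmod{M}$ for an explicit integer $M$ coprime to $n$ (the paper calls it $\mu'$ and reaches it via a slightly longer computation). Both then use the multiplicative orders of the primes $p_i$ modulo $M$ to produce elements $p_i^{N_i}$ witnessing rank exactly $r$. The only real difference is in how the finite generating set is exhibited: you identify $H$ abstractly as the kernel of $\ZZ\!\left[\tfrac{1}{n}\right]^{*}\to(\ZZ/M\ZZ)^{*}$ and appeal to standard lattice algorithms (Hermite/Smith reduction on the relation lattice), whereas the paper writes down an explicit finite list built from the sets $\mathcal{L}^{j}$ together with the elements $p_i^{o(p_i)}$ and $n^{o(n)}$. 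Your description is tidier; the paper's is more concrete. One minor point to make explicit in your write-up: since $\ZZ\!\left[\tfrac{1}{n}\right]^{*}\cong\{\pm 1\}\times\ZZ^{r}$ rather than $\ZZ^{r}$, you should handle the torsion generator $-1$ separately (it lies in $H$ iff $M\mid 2$), but this is a one-line check.
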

In this section, we also describe the auto-fixed closure of $\BS(1,n)$, which is a notion introduced by Martino and Ventura in \cite{MV}. We observe that $\stab(g)=\stab(g^k),$ for all $k \in \mathbb{Z}$. This allows us to conclude that for all $k \in \mathbb{N}$, every element of $\BS(1,n)$ has a unique $k$-th root.

In Section~\S4, we investigate the fixed-point subgroups $\Fix(\varphi)$ for endomorphisms $\varphi \in \End(\BS(1,n))$ and the stabilizer submonoids $e\text{-}\stab(g)$ for elements $g \in \BS(1,n)$. The principal theorems of this section are presented here.
\begin{thm}\label{e-fix}
   Let $\varphi\in \End(\BS(1,n))$ be a non-trivial endomorphism, then either $\Fix\varphi\cong\ZZ$ or $\Fix\varphi\cong \ZZ \left[\tfrac{1}{n}\right]$ or $\Fix(\varphi)$ is trivial. When $\Fix\varphi\cong\ZZ$, its generator is computable.
\end{thm}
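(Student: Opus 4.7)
The plan is to adapt the case analysis of the corresponding automorphism theorem to the endomorphism setting, using O'Niell's description of $\End(\BS(1,n))$ recalled in Section~\S2. Every endomorphism $\varphi$ is determined by $\varphi(a)=a^u$ and $\varphi(t)=a^v t^k$ with $u,v\in \ZZ[\tfrac1n]$ and $k\in \ZZ$, subject to the compatibility relation $u(n^k-n)=0$, which forces either $u=0$ or $k=1$. Working in the semidirect-product model $\BS(1,n)\cong \ZZ[\tfrac1n]\rtimes \ZZ$ with elements written as $(r,m)=a^r t^m$, a short induction yields $\varphi(r,m)=\bigl(ru+v\tfrac{n^m-1}{n-1},\,m\bigr)$ when $k=1$, and $\varphi(r,m)=(v,k)^m$ when $u=0$. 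The fixed-point conditions thereby reduce to explicit equations in $r$ and $m$.

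I would then split into three subcases according to $u$. \emph{If $u=0$}: the second-coordinate equation forces $mk=m$, so either $k\neq 1$ (forcing $m=0$ and $r=0$, hence $\Fix(\varphi)$ trivial) or $k=1$ (giving $\Fix(\varphi)=\langle a^v t\rangle\cong \ZZ$). \emph{If $u=1$}: the first-coordinate equation becomes $v(n^m-1)/(n-1)=0$, so $m=0$ and $\Fix(\varphi)=\ZZ[\tfrac1n]$ when $v\neq 0$, while $v=0$ would make $\varphi=\id$, excluded by non-triviality. \emph{If $u\in \ZZ[\tfrac1n]\setminus\{0,1\}$}: the equation $r(1-u)=v(n^m-1)/(n-1)$ determines $r$ uniquely in $\QQ$ from $m$, so $\Fix(\varphi)\cap (\ZZ[\tfrac1n]\times\{0\})$ is trivial and $\Fix(\varphi)$ embeds into $\ZZ$ via the second-coordinate projection; its image is a subgroup $d\ZZ$ with $d\geq 1$, and $\Fix(\varphi)\cong\ZZ$, generated by the unique fixed element whose second coordinate is $d$.

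For computability, the generator in the $u=0,k=1$ case is simply $a^v t$. In the $u\in\ZZ[\tfrac1n]\setminus\{0,1\}$ case it is $\bigl(v(n^d-1)/((1-u)(n-1)),\,d\bigr)$, where $d$ is the least positive integer for which this first coordinate lies in $\ZZ[\tfrac1n]$. Writing $v/((1-u)(n-1))=P/Q$ in lowest terms and factoring $Q=q_1 q_2$ with $q_1$ supported on primes dividing $n$ and $\gcd(q_2,n)=1$, the integrality condition reduces to $q_2\mid n^d-1$, so the least such $d$ is the multiplicative order of $n$ modulo $q_2$, which is effectively computable. The main delicate point I anticipate is this last number-theoretic reduction together with verifying that the image of $\Fix(\varphi)$ under the second-coordinate projection really is a subgroup of $\ZZ$; both follow from the $\ZZ$-linearity of the fixed-point equation and the explicit formula for $\varphi(r,m)$, so I do not expect a serious obstacle beyond keeping the case analysis cleanly organised.
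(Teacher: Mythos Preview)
Your proposal is correct and follows essentially the same route as the paper: the same Type I/Type II split (your $u=0$ versus $k=1$ dichotomy is exactly O'Niell's classification), the same subcase analysis on $u\in\{0,1\}$ versus $u\notin\{0,1\}$, and the same reduction of computability to a congruence $n^d\equiv 1$ modulo an integer coprime to $n$. The only cosmetic difference is that you phrase the last step directly as a multiplicative order, whereas the paper first produces a candidate exponent via Euler's theorem and then searches its divisors; your formulation is marginally cleaner but not a different idea.
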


\begin{thm}
    Let $g \in \BS(1,n)$ be a non-trivial element. Then $e\text{-}\stab(g)$ is an infinitely generated submonoid of $\End(\BS(1,n))$.
\end{thm}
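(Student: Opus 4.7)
The plan is to use the classification of $\End(\BS(1,n))$ recalled in Section~2 to describe $e\text{-}\stab(g)$ explicitly, exhibit a surjective monoid homomorphism from it onto a concrete commutative monoid, and show that the target is not finitely generated. Since a finite generating set for $e\text{-}\stab(g)$ would push forward to a finite generating set for the image, this will complete the proof.

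First, I would write $g=a^{x}t^{r}$ in normal form with $x\in\ZZ[1/n]$, $r\in\ZZ$, $(x,r)\neq(0,0)$, and set $\alpha_{r}=(n^{r}-1)/(n-1)$. For an endomorphism $\varphi$ with $\varphi(a)=a^{k}$ ($k\neq 0$) and $\varphi(t)=a^{s}t$, a direct calculation using $(a^{s}t)^{\ell}=a^{s(n^{\ell}-1)/(n-1)}t^{\ell}$ shows that $\varphi(g)=a^{kx+s\alpha_{r}}t^{r}$, so $\varphi\in e\text{-}\stab(g)$ iff $s\alpha_{r}=x(1-k)$, and composition reads $\varphi_{k,s}\circ\varphi_{k',s'}=\varphi_{kk',\,s+ks'}$. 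Any endomorphism with $\varphi(a)=1$ contributes at most one further stabilizer element, which acts as a multiplicative zero. If $r=0$, the constraint forces $k=1$ with $s\in\ZZ[1/n]$ free, so $e\text{-}\stab(g)\cong(\ZZ[1/n],+)$; this is not finitely generated as a monoid, because any finitely many elements of $\ZZ[1/n]$, together with their non-negative integer combinations, lie in $\tfrac{1}{n^{W}}\ZZ$ for some $W$, whereas $1/n^{W+1}\in\ZZ[1/n]\setminus\tfrac{1}{n^{W}}\ZZ$.

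For $r\neq 0$, the projection $\nu\colon\varphi_{k,s}\mapsto k$ (with $\nu=0$ on the possible endomorphism with $\varphi(a)=1$) will be, by the composition formula, a monoid homomorphism into $(\ZZ[1/n],\cdot)$; its image contains $1+\alpha_{r}\ZZ[1/n]$, since the choice $k=1+\alpha_{r}m$ gives the valid $s=-xm$ for every $m\in\ZZ[1/n]$. The main obstacle is to show that this image is not finitely generated as a multiplicative monoid. Since $\gcd(\alpha_{r},n)=1$, for each prime $q$ not dividing $n\alpha_{r}$ the Chinese remainder theorem produces $k\in 1+\alpha_{r}\ZZ$ divisible by $q$, so infinitely many distinct rational primes will appear as factors of elements of the image. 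On the other hand, any product of finitely many fixed elements of $\ZZ[1/n]$ involves only the finitely many primes dividing those elements, contradicting finite generation and completing the proof.
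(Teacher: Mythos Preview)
Your argument is correct. The case $r=0$ matches the paper exactly: both identify $e\text{-}\stab(g)$ with the additive group $\ZZ[1/n]$ and deduce non-finite-generation. For $r\neq 0$ the two proofs diverge. The paper parametrises $\stab^{\mathrm I}(\gamma,c)$ bijectively by the $\beta$--coordinate, obtaining a description $\stab^{\mathrm I}(\gamma,c)\leftrightarrow \tfrac{A}{\gcd(\mu,A)}\ZZ[1/n]$ and writes ``$\cong\ZZ[1/n]$''; this gives the full structure of the stabiliser (and is reused in computing the endo-fixed closure), but the identification with $\ZZ[1/n]$ is only a set bijection, not an additive monoid isomorphism, so the passage to ``infinitely generated'' is left implicit. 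You instead project via the $\alpha$--coordinate, which is a genuine monoid homomorphism into $(\ZZ[1/n],\cdot)$, observe that the image contains $1+\alpha_r\ZZ[1/n]$, and then run a clean prime-divisor argument: elements divisible by infinitely many distinct primes appear, whereas any finitely generated multiplicative submonoid of $\ZZ[1/n]$ can only involve finitely many primes. This route sacrifices the explicit description of the stabiliser but supplies a self-contained justification of non-finite-generation that the paper's version does not make explicit. One small point worth stating when you write it up: reduce to $r>0$ via $e\text{-}\stab(g)=e\text{-}\stab(g^{-1})$ so that $\alpha_r\in\ZZ$ and $\gcd(\alpha_r,n)=1$ is literally true, and note that for each prime $q\nmid\alpha_r$ one can choose the witness $k\in 1+\alpha_r\ZZ$ to be non-zero (replacing $m$ by $m+q$ if necessary), so that the absorbing element $0$ coming from a possible Type~II stabiliser does not interfere with the prime-counting.
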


We describe the endo-fixed closure of $\BS(1,n)$ in this section. We also prove that for any $\varphi \in \End(\BS(1,n)), g \in \BS(1,n), \Fix(\varphi)=\Fix(\varphi^k)$ for all $k \in \mathbb{N}$ and $e\text{-}\stab(g)=e\text{-}\stab(g^k)$ for all $k\in \ZZ$
\section{Preliminaries}
 
\noindent The \textbf{Baumslag--Solitar groups} \( \BS(m,n) \), with $m, n \in \mathbb{Z} \setminus \{0\}$, are defined by the presentation
\[
\BS(m,n) = \langle a, t \mid t a^{m} t^{-1} = a^{n} \rangle.
\]
These groups have served as a testing ground for many new ideas in combinatorial and geometric group theory, see \cite{BS} \cite{FM} \cite{FM1} for example.
Among them, the only solvable groups are of the form \( \BS(1,n) \); when \( n \in \mathbb{Z} \setminus \{0\} \). Below we will give other descriptions of the group $\BS(1,n)$. 

These groups are HNN-extension of $\mathbb{Z} =$ \(\langle a \rangle\) relative to the subgroups \(\mathbb{Z}\) and \(n\mathbb{Z}\), and the isomorphism 
\[
\varphi : \mathbb{Z} \to n\mathbb{Z}, \quad \varphi(a) = a^{n}.
\]

There is a well defined onto homomorphism $\BS(1,n) \to \mathbb{Z}$, given by $a \mapsto 0$ and $t \mapsto 1$, the kernel of which turns out to be the normal closure $\langle\langle a\rangle\rangle$ generated by $a$, isomorphic to $\mathbb{Z
}[\frac{1}{n}].$ Thus $\BS(1,n)$ fits into the short exact sequence $\begin{array}{ccccccccc} 1 & \to\, & \mathbb{Z}\left[\frac{1}{n}\right] & \, \to & \BS(1,n) & {\to} & \ZZ  & \to & 1.\end{array}$
that splits. This shows that $\BS(1,n)$ is a metabelian group of the form \[
\BS(1, n) \cong \mathbb{Z}\!\left[\tfrac{1}{n}\right] \rtimes \mathbb{Z}.
\]
Throughout the paper, we will work with the semidirect product structure of $\BS(1,n)$, the elements of $\BS(1,n)$ will be denoted by $(\gamma, c)$, where $\gamma \in \mathbb{Z}\left[\frac{1}{n}\right]$ and $c \in \mathbb{Z},$ and the operation is given by \begin{equation}\label{eq:1}
(\gamma_1,c_1).(\gamma_2,c_2) := (\gamma_1n^{c_2}+\gamma_2, c_1+c_2).
\end{equation}

\noindent For any $n \in \mathbb{Z}$ with $n \neq \pm 1$, consider the monomorphism $\BS(1,n) \longrightarrow \GL_2\!\left(\mathbb{Z}\!\left[\tfrac{1}{n}\right]\right)$ given by $(\gamma,c) \longmapsto
\begin{pmatrix}
n^{c} & 0 \\
\gamma & 1
\end{pmatrix}.$ Using this, we see that the group $\BS(1,n)$, for $n \neq \pm 1$, can be viewed as a linear group. The above monomorphism can also be used to show that $\BS(1,n) \cong \BS(1,-n)\quad \text{whenever } n \neq \pm 1.$ Henceforth we work with $\BS(1,n),$ where $n\geq2$.

The following lemma will be used throughout the paper.

\begin{lem}\label{basic}
    For $(\gamma, c) \in \BS(1,n),$ where $\gamma \in \mathbb{Z}\left[\frac{1}{n}\right]$ and $c \in \mathbb{Z},$ the following statements hold true: 
    \begin{enumerate}
\item $(\gamma, c)^{-1} = (-\gamma n^{-c},-c);$
\item For $k \in \mathbb{Z}, (\gamma, c)^k=\left(\frac{n^{ck}-1}{n^c-1}\gamma,ck\right).$
\end{enumerate}
\end{lem}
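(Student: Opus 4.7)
The plan is to prove both parts by direct manipulation of the semidirect product multiplication rule given in equation \eqref{eq:1}.

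For part (1), I would simply verify the claim by computation: using the multiplication rule,
\[
(\gamma, c)\cdot(-\gamma n^{-c}, -c) = \bigl(\gamma n^{-c} + (-\gamma n^{-c}),\, c + (-c)\bigr) = (0,0),
\]
and similarly
\[
(-\gamma n^{-c}, -c)\cdot(\gamma, c) = \bigl(-\gamma n^{-c}\cdot n^{c} + \gamma,\, -c + c\bigr) = (0,0).
\]
Since $(0,0)$ is the identity of the semidirect product and inverses are unique, this proves the stated formula.

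For part (2), I would proceed by induction on $k$ for $k \geq 0$. The base cases $k=0$ and $k=1$ are immediate (for $k=0$, one interprets $\tfrac{n^{0}-1}{n^{c}-1}\gamma$ as $0$). For the inductive step, assuming the formula for $k$, I compute
\[
(\gamma, c)^{k+1} = (\gamma,c)^{k}\cdot(\gamma, c) = \left(\tfrac{n^{ck}-1}{n^{c}-1}\gamma\cdot n^{c} + \gamma,\; ck + c\right),
\]
and a short algebraic simplification of the first coordinate,
\[
\tfrac{(n^{ck}-1)n^{c} + (n^{c}-1)}{n^{c}-1}\gamma = \tfrac{n^{c(k+1)}-1}{n^{c}-1}\gamma,
\]
yields the desired expression. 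For $k<0$, I would combine the positive-power formula with part (1): writing $(\gamma,c)^{k} = \bigl((\gamma,c)^{-1}\bigr)^{-k} = (-\gamma n^{-c},-c)^{-k}$, applying the already-established formula with $k$ replaced by $-k>0$, and simplifying produces the stated expression.

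One small subtlety worth flagging in the write-up is the case $c=0$, where the denominator $n^{c}-1$ vanishes; in this case one reads the formula via the geometric-sum identity
\[
\tfrac{n^{ck}-1}{n^{c}-1} = 1 + n^{c} + n^{2c} + \cdots + n^{c(k-1)},
\]
which specialises to $k$ when $c=0$, recovering $(\gamma,0)^{k}=(k\gamma,0)$ consistent with the underlying abelian subgroup $\{(\gamma,0):\gamma\in\ZZ[\tfrac1n]\}\cong\ZZ[\tfrac1n]$. Apart from this bookkeeping, the entire lemma is mechanical, and I do not anticipate any genuine obstacle.
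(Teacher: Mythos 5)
Your proposal is correct and follows essentially the same route as the paper: a direct verification of the inverse formula from the multiplication rule \eqref{eq:1}, induction on $k\ge 0$ for the power formula, and reduction of negative exponents to positive ones via part (1). Your extra remark interpreting $\tfrac{n^{ck}-1}{n^c-1}$ as the geometric sum when $c=0$ is a sensible clarification the paper leaves implicit, but it does not change the argument.
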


\begin{proof}
    For (i), using the composition rule in ~\ref{eq:1}, an easy calculation shows that $(\gamma, c).(-\gamma n^{-c},-c)=(-\gamma n^{-c},-c).(\gamma,c)=(0,0),$ which is the identity element of $\BS(1,n).$ Hence, $(\gamma, c)^{-1} = (-\gamma n^{-c},-c).$ 

    For (ii), we first consider the case when $k \geq 0.$ The statement is obviously true for $k=0,k=1.$ Assume that $(\gamma, c)^k=\left(\frac{n^{ck}-1}{n^c-1}\gamma,ck\right).$ Then using the composition rule in ~\ref{eq:1}, $(\gamma, c)^{k+1}=\left(\frac{n^{ck}-1}{n^c-1}\gamma,ck\right).(\gamma,c)=\left(\frac{n^{ck}-1}{n^c-1}\gamma.n^c+ \gamma, (k+1)c\right)=\left(\gamma \left(\frac{n^{c(k+1)}-n^c}{n^c-1} + 1\right), (k+1)c\right)=\left(\frac{n^{c(k+1)}-1}{n^c-1}\gamma,c(k+1)\right).$ So we have, $(\gamma, c)^k=\left(\frac{n^{ck}-1}{n^c-1}\gamma,ck\right)$ for all $k \geq 0.$
    
    For $k<0,$ we apply the above for $-k \geq 0$ and get $(\gamma, c)^k=((\gamma, c)^{-1})^{-k}=(-\gamma n^{-c},-c)^{-k}=\left(\frac{n^{ck}-1}{n^{-c}-1}. -\gamma n^{-c},ck\right)= \left(\frac{n^{ck}-1}{n^c-1}\gamma,ck\right).$ \newline
    This completes the proof.
\end{proof}

\noindent The following description of finitely generated subgroups of $\BS(1,n)$ will be used multiple times throughout the paper:
\begin{lem} \label{subgroups}
Let $H$ be a finitely generated subgroup of $\BS(1,n)$. Then $H$ is infinite cyclic or of finite index.
\end{lem}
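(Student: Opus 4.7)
The plan is to analyze $H$ by means of the split short exact sequence $1 \to \ZZ[\tfrac{1}{n}] \to \BS(1,n) \xrightarrow{\pi} \ZZ \to 1$ recorded earlier in this section, looking at $H_0 := H \cap \ZZ[\tfrac{1}{n}]$ and $\pi(H)$. Since $\pi(H)$ is a subgroup of $\ZZ$, it is cyclic, so I may write $\pi(H) = d\ZZ$ with $d \geq 0$.

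I will first handle the two situations that immediately give $H$ infinite cyclic. If $\pi(H) = 0$, then $H$ is a finitely generated subgroup of $\ZZ[\tfrac{1}{n}] \subset \QQ$, and any such subgroup sits inside $\tfrac{1}{N}\ZZ$ for a sufficiently divisible $N$ (clear denominators of a finite generating set), hence is infinite cyclic. If instead $H_0 = 0$, then $\pi$ is injective on $H$ and $H \cong d\ZZ \cong \ZZ$.

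It remains to treat the case $d \geq 1$ and $H_0 \neq 0$, where I will show $H$ has finite index. Pick $h \in H$ with $\pi(h) = d$ and a nonzero $\gamma \in H_0$. Since $H_0$ is normal in $H$, it is stable under conjugation by $h$ and $h^{-1}$, and a direct computation in the semidirect product (using Lemma~\ref{basic} and the product formula~\eqref{eq:1}) shows that this conjugation multiplies the $\ZZ[\tfrac{1}{n}]$-coordinate by $n^{\pm d}$. Therefore $H_0$ is closed under multiplication by every element of the subring $\ZZ[n^d, n^{-d}]$ of $\QQ$. The key arithmetic identification $\ZZ[n^d, n^{-d}] = \ZZ[\tfrac{1}{n}]$ (which holds because inverting $n^d$ in $\QQ$ inverts exactly the primes dividing $n$) then yields $\gamma \ZZ[\tfrac{1}{n}] \subseteq H_0$.

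To finish, I check that $\gamma \ZZ[\tfrac{1}{n}]$ has finite index in $\ZZ[\tfrac{1}{n}]$. Writing $\gamma = a/n^s$ and factoring $a = bc$ with $b$ supported on primes dividing $n$ (so $b$ is a unit in $\ZZ[\tfrac{1}{n}]$) and $\gcd(c,n) = 1$, one gets $\gamma \ZZ[\tfrac{1}{n}] = c\,\ZZ[\tfrac{1}{n}]$; and since $n$ is invertible modulo $c$, the natural map $\ZZ \to \ZZ[\tfrac{1}{n}]/c\ZZ[\tfrac{1}{n}]$ is surjective with kernel $c\ZZ$, giving index $|c|$. Combined with $[\ZZ:d\ZZ] = d$, this forces $[\BS(1,n):H] \leq d|c| < \infty$. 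The only mildly subtle ingredients are the identification $\ZZ[n^d, n^{-d}] = \ZZ[\tfrac{1}{n}]$ and the quotient computation $\ZZ[\tfrac{1}{n}]/c\ZZ[\tfrac{1}{n}] \cong \ZZ/c\ZZ$; everything else is routine bookkeeping in $\ZZ[\tfrac{1}{n}] \rtimes \ZZ$.
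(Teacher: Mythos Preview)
Your proof is correct and follows the same overall strategy as the paper: both arguments project $H$ to $\ZZ$ via the split exact sequence $1 \to \ZZ[\tfrac{1}{n}] \to \BS(1,n) \to \ZZ \to 1$, dispose of the cases where the image or the kernel-intersection is trivial, and then argue that the remaining case gives finite index. The difference lies in that last step. The paper reduces $H$ to a two-generator form $\langle (\gamma,k),(\gamma'',0)\rangle$ and then cites Bogopolski~\cite{Bo} for the finite-index conclusion when $\gamma''\neq 0$. You instead give a self-contained argument: conjugation by an element with $\pi$-image $d$ shows $H_0$ is a $\ZZ[n^{\pm d}]=\ZZ[\tfrac{1}{n}]$-submodule of $\ZZ[\tfrac{1}{n}]$, hence contains a principal ideal $c\,\ZZ[\tfrac{1}{n}]$ with $\gcd(c,n)=1$, which has index $|c|$. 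Your route avoids the external reference and makes the index bound explicit, at the cost of a couple of extra lines of arithmetic in $\ZZ[\tfrac{1}{n}]$; the paper's route is terser but less self-contained.
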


\begin{proof}
Let $H=\langle (\gamma_1,k_1),\ldots,(\gamma_l,k_l)\rangle$. Consider the epimorphism
$\psi: BS(1,n)\to\mathbb{Z}$, where $a\mapsto 0,\; t\mapsto 1$. Its image $\psi(H)$ is cyclic, say generated by $k$. If $k=0$, then $H$ is a finitely generated subgroup of $\ker \psi\cong \mathbb{Z}\!\left[\tfrac{1}{n}\right]$ and is therefore infinite cyclic. For $k\neq0$, we can write $H=\langle (\gamma,k),(\gamma_1',0),\ldots,(\gamma_l',0)\rangle$, for some $\gamma\in\mathbb{Z}\!\left[\tfrac{1}{n}\right]$ and $(\gamma_i',0)\in\ker\psi$ for all $1\leq i\leq l$. Since $\ker\psi\cong\mathbb{Z}\!\left[\tfrac{1}{n}\right]$ and every finitely generated subgroup of $\mathbb{Z}\!\left[\tfrac{1}{n}\right]$ is cyclic, we obtain $H=\langle (\gamma,k),(\gamma'',0)\rangle \quad\text{for some }(\gamma'',0)\in\ker\psi.$ If $\gamma''=0$, then $H$ is infinite cyclic. If $\gamma''\neq 0$, then by the discussion in section 3 of \cite{Bo}, the subgroup $H$ has finite index in $\BS(1,n)$ and $H\cong \BS(1,n^{k})$.

\end{proof}

Now we are going to describe the automorphisms of $\BS(1,n)$ and structure of the group $\Aut(\BS(1,n))$ given by O'Niell \cite{ON} and Mitra, Roy, Ventura \cite{MRV}.
For $n\neq \pm 1$, the automorphism group of $\BS(1,n)$ is 
 $$
\Aut(\BS(1,n))=\left\{\varphi_{\alpha, \beta} \mid \alpha\in \ZZ[\tfrac{1}{n}]^*,\, \beta \in \ZZ[\tfrac{1}{n}]\right\},
 $$
where $\varphi_{\alpha, \beta}(0,1)=(\beta,1),$ and $\varphi_{\alpha, \beta}(1,0)=(\alpha,0).$ Here, $\ZZ[\tfrac{1}{n}]^*$ denotes the group of units in $\ZZ[\tfrac{1}{n}]$, which is isomorphic to $\mathbb{Z}/2\mathbb{Z} \times \mathbb{Z}^r$, where $r$ is the number of distinct primes present in $n$.

For an arbitrary element $(\gamma, c)$, where $\gamma = \frac{l}{n^k}, \qquad l, k \in \mathbb{Z}, \; k > 0$, we have $(\gamma, c) = (0,1)^{c} \cdot (0,1)^{k} \cdot (1,0)^{l} \cdot (0,1)^{-k}$. Hence, $\varphi_{\alpha,\beta}(\gamma, c)
= \varphi_{\alpha,\beta}\big((0,1)^{c} \cdot (0,1)^{k} \cdot (1,0)^{l} \cdot (0,1)^{-k}\big)
= (\beta,1)^{c} \cdot (\beta,1)^{k} \cdot (\alpha,0)^{l} \cdot (\beta,1)^{-k}$. Note that $(\beta,1)^m = \big(\beta(1+n+\cdots+n^{m-1}), m\big)
= \left(\beta \frac{n^m - 1}{n - 1}, m\right),
\forall\, m \in \mathbb{Z}\setminus {\{0\}}$, and $(\alpha,0)^l = (l\alpha, 0),  \forall\, l \in \mathbb{Z}\setminus {\{0\}}$. 

Therefore,
\[
\begin{aligned}
\varphi_{\alpha,\beta}(\gamma, c)
&= \left(\beta \frac{n^c - 1}{n - 1}, c\right)
   \cdot \left(\beta \frac{n^k - 1}{n - 1}, k\right)
   \cdot (l\alpha, 0)
   \cdot \left(\beta \frac{n^{-k} - 1}{n - 1}, -k\right) \\
&= \left(\beta \frac{n^c - 1}{n - 1}, c\right)
   \cdot \left(l\alpha + \beta \frac{n^k - 1}{n - 1}, k\right)
   \cdot \left(\beta \frac{n^{-k} - 1}{n - 1}, -k\right) \\
&= \left(\beta \frac{n^c - 1}{n - 1}, c\right)
   \cdot \left(\alpha \frac{l}{n^k}, 0\right) \\
&= \left(\alpha \frac{l}{n^k} + \beta \frac{n^c - 1}{n - 1}, c\right) \\
&= \left(\alpha \gamma + \beta \frac{n^c - 1}{n - 1},\, c\right).
\end{aligned}
\]

Now if $\varphi_{\alpha, \beta}, \varphi_{\alpha', \beta'} \in \Aut(\BS(1,n))$, then $\varphi_{\alpha, \beta} \circ \varphi_{\alpha', \beta'} (\gamma,c) = \varphi_{\alpha, \beta}\left(\alpha' \gamma + \beta' \frac{n^c - 1}{n - 1}, c\right)=\left(\alpha (\alpha' \gamma + \beta' \frac{n^c - 1}{n - 1})+\beta \frac{n^c - 1}{n - 1}, c\right)=\left(\alpha \alpha' \gamma + (\alpha \beta' + \beta) \frac{n^c - 1}{n - 1}, c\right)=\varphi_{\alpha \alpha', \alpha \beta' +\beta}(\gamma,c).$ So we have, $\varphi_{\alpha, \beta} \circ \varphi_{\alpha', \beta'}= \varphi_{\alpha \alpha', \alpha \beta' +\beta}$.

From the above composition rule, it is clear that there is a well defined onto homomorphism $\Aut(\BS(1,n)) \to \ZZ[\tfrac{1}{n}]^*$ given by $\varphi_{\alpha, \beta} \mapsto \alpha$, whose kernel is $\left\{\varphi_{1, \beta} \middle| \beta \in \mathbb{Z}\left[\frac{1}{n}\right]\right\} \cong \mathbb{Z}\left[\frac{1}{n}\right]$. Thus $\Aut(\BS(1,n))$ fits into the short exact sequence $\begin{array}{ccccccccc} 1 & \to\, & \mathbb{Z}\left[\frac{1}{n}\right] & \, \to & \Aut(\BS(1,n)) & {\to} & \ZZ[\tfrac{1}{n}]^*  & \to & 1.\end{array}$ that splits. This shows that $\Aut(\BS(1,n))$ is also a metabelian group of the form \[
\Aut(\BS(1, n)) \cong \mathbb{Z}\!\left[\tfrac{1}{n}\right] \rtimes \ZZ\left[\tfrac{1}{n}\right]^*.
\]
The composition rule of the group $\Aut(\BS(1,n))$ ensures that this group can also be viewed as a linear group, where the monomorphism $\Aut(\BS(1,n)) \to \GL_2(\mathbb{Z}\left[\frac{1}{n}\right])$ is given by $\varphi_{\alpha, \beta} \mapsto \begin{pmatrix} \alpha & \beta \\ 0 & 1 \end{pmatrix}.$ 
Throughout this paper, we will use the following description of $\Aut(\BS(1,n)).$
\[
\Aut(\BS(1, n)) = \left\{\begin{pmatrix} \alpha & \beta \\ 0 & 1 \end{pmatrix} \middle| \alpha\in \ZZ\left[\tfrac{1}{n}\right]^*,\, \beta \in \ZZ\left[\tfrac{1}{n}\right]\right\}
\] where the composition rule is given by usual matrix multiplication and the action of $\Aut(\BS(1,n))$ on $\BS(1,n)$ is given by 
\[
\begin{pmatrix} \alpha & \beta \\ 0 & 1 \end{pmatrix} (\gamma,c) = \left(\alpha \gamma + \beta \frac{n^c - 1}{n - 1}, c\right)
\]
For detailed calculation, see subsection~\S 2.2 titled \textit{Automorphisms of $\BS(1,n)$} in \cite{MRV}.

Now we are going to describe the endomorphisms of $\BS(1,n).$ Following Proposition 2.1 in \cite{ON}, there are two types of endomorphisms. 

\noindent \textit{Type (i) :} We denote the set of all \textit{Type (i)} endomorphisms as  \[
\End^{\mathrm{I}}(\BS(1,n)) := \left\{\begin{pmatrix} \alpha & \beta \\ 0 & 1 \end{pmatrix} \middle| \alpha\in \ZZ\left[\tfrac{1}{n}\right] \setminus{\{0\}},\, \beta \in \ZZ\left[\tfrac{1}{n}\right]\right\}
\] where the composition rule is given by usual matrix multiplication and the action of $\End^{\mathrm{I}}(\BS(1,n))$ on $\BS(1,n)$ is given by 
\[
\begin{pmatrix} \alpha & \beta \\ 0 & 1 \end{pmatrix} (\gamma,c) = \left(\alpha \gamma + \beta \frac{n^c - 1}{n - 1}, c\right)
\] 

\noindent \textit{Type (ii) :} We denote the set of all \textit{Type (ii)} endomorphisms as  \[
\End^{\mathrm{II}}(\BS(1,n)) := \left\{ \varphi_{(\gamma,c)} \mid (\gamma,c) \in \BS(1,n)\right\},
\] where $\varphi_{(\gamma,c)}(1,0)=(0,0), \varphi_{(\gamma,c)}(0,1)=(\gamma,c).$ \newline
For $(\gamma,c),(\gamma',c') \in \BS(1,n), \varphi_{(\gamma,c)} \circ \varphi_{(\gamma',c')}(1,0)=(0,0), \varphi_{(\gamma,c)} \circ \varphi_{(\gamma',c')}(0,1)= \varphi_{(\gamma,c)}(\gamma',c').$
Now, let $\gamma' = \frac{l}{n^k},  l, k \in \mathbb{Z}, \; k > 0$, we have $(\gamma', c') = (0,1)^{c'} \cdot (0,1)^{k} \cdot (1,0)^{l} \cdot (0,1)^{-k}$. Hence, $\varphi_{(\gamma, c)}(\gamma',c')
= \varphi_{(\gamma,c)}\big((0,1)^{c'} \cdot (0,1)^{k} \cdot (1,0)^{l} \cdot (0,1)^{-k}\big)
= (\gamma,c)^{c'}=\left(\frac{n^{cc'}-1}{n^c-1}\gamma,cc'\right)$ by using Lemma \ref{basic}. \\
So we have the composition rule for \textit{Type (ii)} endomorphisms as follows
\[ \varphi_{(\gamma,c)} \circ \varphi_{(\gamma',c')} = \varphi_{\left(\frac{n^{cc'}-1}{n^c-1}\gamma,cc'\right)}\]

\begin{rem}
    $\End^{\mathrm{I}}(\BS(1,n))$ and $\End^{\mathrm{II}}(\BS(1,n))$ are two disjoint subsets of the monoid $\End(\BS(1,n))$ that are closed under composition.
\end{rem}
\section{Fixed subgroups of automorphisms and stabilizers}
\begin{dfn}
Let $G$ be a group and $\Aut(G)$ be its automorphism group. Let $\varphi\in \Aut(G)$, then the fixed subgroup of $\varphi$ denoted by $\Fix\varphi$ is given by $\Fix\varphi=\left\{g\in G\middle| \varphi(g)=g\right\}$  
\end{dfn}

Clearly, $\Fix\varphi$ forms a group. 

\begin{thm} \label{fixed}
    Let $\varphi\in \Aut(\BS(1,n))$ be a non-trivial automorphism, then either $\Fix\varphi\cong\ZZ$ or $\Fix\varphi\cong \ZZ \left[\tfrac{1}{n}\right]$. When $\Fix\varphi\cong\ZZ$, its generator is computable.
\end{thm}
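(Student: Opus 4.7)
The plan is to reduce the fixed-point condition to a divisibility problem in $\mathbb{Z}[\tfrac{1}{n}]$ using the matrix description of $\Aut(\BS(1,n))$. Writing $\varphi=\varphi_{\alpha,\beta}$, the equation $\varphi(\gamma,c)=(\gamma,c)$ preserves the $c$-coordinate automatically, and collapses on the first coordinate to
\[
(\alpha-1)\gamma \;=\; -\beta\,\frac{n^{c}-1}{n-1}. \tag{$\ast$}
\]
I would then use the projection $\psi\colon\BS(1,n)\to\ZZ$, $(\gamma,c)\mapsto c$, and split the analysis according to the image $\psi(\Fix\varphi)=d\ZZ$ and the kernel $\ker\psi|_{\Fix\varphi}$, which is exactly the set of solutions to $(\ast)$ with $c=0$, namely $(\alpha-1)\gamma=0$.

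\textbf{Case $\alpha=1$.} Non-triviality forces $\beta\neq 0$, and for $c\neq 0$ equation $(\ast)$ becomes $\beta(n^{c}-1)=0$, which has no solution since $n\geq 2$. Hence $\psi|_{\Fix\varphi}\equiv 0$ and $\Fix\varphi=\{(\gamma,0):\gamma\in\ZZ[\tfrac{1}{n}]\}\cong\ZZ[\tfrac{1}{n}]$.

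\textbf{Case $\alpha\neq 1$.} Here $\ker\psi|_{\Fix\varphi}=\{(0,0)\}$, and $(\ast)$ has a unique candidate solution for each $c$, namely $\gamma_{c}=-\beta(n^{c}-1)/((\alpha-1)(n-1))$. If $\beta=0$ every $c$ works and $\Fix\varphi=\langle(0,1)\rangle\cong\ZZ$. If $\beta\neq 0$ I would write $\alpha-1$ and $\beta$ in the canonical form $\pm n^{k}m$ with $\gcd(m,n)=1$ (possible because $\ZZ[\tfrac{1}{n}]^{*}$ is generated by $-1$ and the primes dividing $n$). The membership $\gamma_{c}\in\ZZ[\tfrac{1}{n}]$ then reduces to an honest integer divisibility $M\mid n^{c}-1$, where $M$ is a specific positive integer coprime to $n$ built from the $n$-free parts of $\alpha-1$, $\beta$, and from $n-1$. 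Since $n$ is a unit modulo $M$, the set of valid exponents is the nontrivial subgroup $d\ZZ$ with $d=\ord_{M}(n)$, which is explicitly computable from $\alpha,\beta,n$.

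To conclude I would verify, using Lemma~\ref{basic}(ii), that powers of $(\gamma_{d},d)$ exhaust the fixed points: the identity $\frac{n^{dk}-1}{n^{d}-1}\gamma_{d}=\gamma_{dk}$ is immediate from the closed form for $\gamma_{c}$, so $(\gamma_{d},d)^{k}=(\gamma_{dk},dk)$ and $\Fix\varphi=\langle(\gamma_{d},d)\rangle\cong\ZZ$, with generator $(\gamma_{d},d)$ computable. The main obstacle I expect is bookkeeping in Case $\alpha\neq 1,\ \beta\neq 0$: one must carefully separate the $n$-part and $n$-coprime part of the relevant elements of $\ZZ[\tfrac{1}{n}]$ to isolate the correct integer $M$, after which existence of a nonzero valid $c$ is guaranteed by $n$ being invertible mod $M$.
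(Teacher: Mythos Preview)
Your proposal is correct and follows essentially the same route as the paper: derive the fixed-point equation $(\ast)$, split on $\alpha=1$ versus $\alpha\neq1$, observe that the projection $(\gamma,c)\mapsto c$ is injective on $\Fix\varphi$ in the second case, and reduce the condition $\gamma_c\in\ZZ[\tfrac{1}{n}]$ to a congruence $n^{c}\equiv 1\pmod{M}$ for an explicit $M$ coprime to $n$ (the paper finds a valid $c_0$ via Euler's totient and then searches its divisors, whereas you compute $d=\ord_M(n)$ directly, but the idea is the same). One small correction: the ``canonical form $\pm n^{k}m$ with $\gcd(m,n)=1$'' is only valid when $n$ is a prime power; in general a nonzero element of $\ZZ[\tfrac{1}{n}]$ factors as $u\cdot m$ with $u\in\ZZ[\tfrac{1}{n}]^{*}$ and $m\in\ZZ_{>0}$ coprime to $n$, which is precisely the ``separate the $n$-part from the $n$-coprime part'' step you allude to at the end.
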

\begin{proof}
    Let $\varphi=\begin{pmatrix} \alpha & \beta \\ 0 & 1 \end{pmatrix} $ for some $\alpha\in \ZZ\left[\tfrac{1}{n}\right]^*$ and $\beta \in \ZZ\left[\tfrac{1}{n}\right].$ If $(\gamma,c)\in \Fix\varphi$, then $\varphi(\gamma,c)=(\gamma,c)$, that is, $\begin{pmatrix} \alpha & \beta \\ 0 & 1 \end{pmatrix}(\gamma,c)=(\gamma,c)$ which gives us $\left(\alpha \gamma + \beta \frac{n^c - 1}{n - 1}, c\right)=(\gamma,c)$ and hence 
    \begin{equation}\label{eq:2}
        (\alpha-1) \gamma + \beta \frac{n^c - 1}{n - 1}=0.
    \end{equation}    
    Now we will consider two cases. \\
    \textit{Case (i): $\alpha=1$}
    
    
    \noindent If $\alpha=1$, then $\beta\neq 0$ as $\varphi$ is non-trivial and $\beta \frac{n^c - 1}{n - 1}=0$. This implies $\frac{n^c - 1}{n - 1}=0\implies c=0$. Therefore, for $\varphi=\begin{pmatrix} 1 & \beta \\ 0 & 1 \end{pmatrix}$, $\Fix\varphi=\left\{(\gamma,0)\middle|\gamma\in \ZZ\left[\tfrac{1}{n}\right]\right\}\cong \ZZ\left[\tfrac{1}{n}\right]$. 
    
    \noindent \textit{Case (ii) : $\alpha \neq 1$} 
    
    \noindent For $\alpha\neq 1$, by equation \ref{eq:2} the map $\BS(1,n)\longrightarrow\ZZ$ where $(\gamma,c)\longmapsto c$ when restricted to $\Fix\varphi$, is injective and hence $\Fix\varphi\leq \ZZ$. For $(\gamma,c)$ to be in $\Fix\varphi$, $\gamma=-\frac{\beta}{\alpha-1} \frac{n^c - 1}{n - 1}$ (by equation ~\ref{eq:2}). Let $n=p_1^{k_1}p_2^{k_2}..p_r^{k_r}$ where $k_i\in \ZZ_{>0}$, then as $\alpha\in \ZZ \left[\tfrac{1}{n}\right]^* $, $\alpha=\prod_{j=1}^r p_j^{s_j}$ where $s_j\in \ZZ \;\forall j$. Let $J_-:=\{\,j:\, s_j<0\,\},J_+:=\{\,j:\, s_j>0\,\} $ and $J_0:=\{\,j:\, s_j=0\,\}$. Substituting $\alpha=\prod_{j=1}^r p_j^{s_j}$ into $\gamma=\frac{\beta}{\alpha-1}\,\frac{n^c-1}{n-1}$ gives $$\gamma =\frac{\beta\,(n^c-1)}
{\big(\prod_{j=1}^r p_j^{s_j}-1\big)(n-1)}.$$ Now, multiply numerator and denominator by $\prod_{j\in J_-} p_j^{-s_j}$, yielding $$\gamma
=\beta\;
\frac{\left(\prod_{j\in J_-} p_j^{-s_j}\right)(n^c-1)}
{\left(\prod_{j\in J_+} p_j^{s_j}
-\prod_{j\in J_-} p_j^{-s_j}\right)(n-1)}.$$ Now, for $(\gamma,c)$ to lie in $\mathrm{Fix}\,\varphi$,  $\gamma$ must belong to $\mathbb{Z}\!\left[\tfrac{1}{n}\right]$. In other words, we must find a value of $c$ that ensures $\gamma\in \mathbb{Z}\!\left[\tfrac{1}{n}\right]$. Let $Q=\bigg(\prod_{j\in J_+ } p_j^{\,s_j}
\;-\;
\prod_{j\in J_-} p_j^{-s_j}\bigg)$. Note that $\gcd(Q,p_j)=1$ for all primes $p_j$ appearing in the factorization of $\alpha$, that is for all $p_j$ where $j\in J_+ \bigcup J_-$. Hence $Q=\bigg(\prod_{j\in J_0} p_j^{\,s_j}\bigg) Q'$,
where the product is taken over all primes $p_j$ that appear in the factorization of $n$ but not in the factorization of $\alpha$, with $s_j\ge 0$. Consequently, $\gcd(Q',n)=1$. Thus, for some $\beta = \frac{l}{n^{q}}$ for some $l,q\in \mathbb{Z}$ with $q\ge 0$, 
\[
\gamma
= \frac{l\,(n^{c}-1)}{n^{q}\;\big(\prod_{j\in J_0} p_j^{\,s_j}\big)\,Q'\,(n-1)}.
\] Note that $n^{q}\;\big(\prod_{j\in J_0} p_j^{\,s_j}\big)$ is invertible in $\mathbb{Z}\!\left[\tfrac{1}{n}\right]$. Also, $\gcd(n-1,n)=1$ and $\gcd(Q',n)=1$ and consequently $(n-1)Q'$ is not invertible. Since
$(n-1)Q'$ is co-prime to $n$, the congruence equation $n^c\equiv 1 \,(\text{mod}\, (n-1)Q')$ has a non-zero solution for $c$ (Euler's totient function, for instance), say $c_0$. For all elements in $\Fix\varphi$, $c$ uniquely determines $\gamma$ (by equation ~\ref{eq:2}). So, we can check for all divisors of $c_0$ whether $\gamma=-\frac{\beta}{\alpha-1} \frac{n^c - 1}{n - 1}$ belongs to $\mathbb{Z}\!\left[\tfrac{1}{n}\right]$. Let $c=\tilde{c}$ be the smallest divisor of $c_0$ for which $-\frac{\beta}{\alpha-1} \frac{n^c - 1}{n - 1}\in \mathbb{Z}\!\left[\tfrac{1}{n}\right]$. Then $\Fix \varphi=\langle(\tilde{\gamma},\tilde{c})\rangle$, where $\tilde{\gamma}=-\frac{\beta}{\alpha-1} \frac{n^{\tilde{c}} - 1}{n - 1}$.

\end{proof}

Now we will talk about stabilizers of $\BS(1,n)$ under the action of its automorphisms. 

\begin{dfn}
    Let $H$ be a subgroup of a group $G$. Then stabilizer of $H$ denoted by $\stab(H)$ is defined by the set $\{\varphi \in \Aut(G) \mid \varphi(h)=h$ for all $h \in H\}.$
\end{dfn}
Clearly, $\stab(H)$ is a subgroup of $\Aut(G).$
Note that if $H=\langle g \rangle$, where $g \in G$ then, $\stab(H)=$$\{\varphi \in \Aut(G) \mid \varphi(g)=g\}$. In this case, we denote $\stab(H)$ as $\stab(g).$ We also observe that $\stab(g)=\stab(g^{-1}).$
\begin{thm} \label{stab}
    Let $(\gamma,c)$ be a non-trivial element of $\BS(1,n)$. Then  $\stab(\gamma,0) \cong \mathbb{Z}\left[\frac{1}{n}\right]$ and if $c \neq 0$, then $\stab(\gamma,c)$ is a finitely generated abelian group with rank equals to the number of distinct prime present in $n$ and a finite set of generators is computable.
\end{thm}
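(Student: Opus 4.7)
The plan is to use the matrix description of $\Aut(\BS(1,n))$ from Section~\S2 to reduce the stabilization condition to an explicit divisibility equation in $\ZZ[\tfrac{1}{n}]$, and then pass to the finite quotient modulo $n^{c}-1$ to count the solutions. Writing $\varphi_{\alpha,\beta}$ as in the preliminaries, the same computation that produced equation~\ref{eq:2} in the proof of Theorem~\ref{fixed} shows that $\varphi_{\alpha,\beta}$ lies in $\stab(\gamma,c)$ if and only if $(\alpha-1)\gamma + \beta\frac{n^{c}-1}{n-1}=0$. If $c=0$, this reduces to $(\alpha-1)\gamma=0$; since $\gamma\neq 0$ and $\ZZ[\tfrac{1}{n}]$ is a domain, $\alpha=1$ while $\beta$ is unconstrained, giving $\stab(\gamma,0)=\{\varphi_{1,\beta}:\beta\in\ZZ[\tfrac{1}{n}]\}\cong\ZZ[\tfrac{1}{n}]$.

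Assume now $c\neq 0$. Then $\beta$ is uniquely determined by $\alpha$ via $\beta=-(\alpha-1)(n-1)\gamma/(n^{c}-1)$, so the projection $\varphi_{\alpha,\beta}\mapsto\alpha$ embeds $\stab(\gamma,c)$ into $\ZZ[\tfrac{1}{n}]^*$ with image
\[
S=\left\{\alpha\in\ZZ[\tfrac{1}{n}]^*\;\middle|\;(n^{c}-1)\text{ divides }(\alpha-1)(n-1)\gamma\text{ in }\ZZ[\tfrac{1}{n}]\right\}.
\]
The identity $1-\alpha_{1}\alpha_{2}=(1-\alpha_{1})+\alpha_{1}(1-\alpha_{2})$ together with its analogue for inverses makes $S$ a subgroup. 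Set $N=n^{c}-1$; since $\gcd(N,n)=1$, every prime dividing $n$ is invertible modulo $N$, so the canonical surjection $\ZZ\to\ZZ/N\ZZ$ extends uniquely to a ring homomorphism $\sigma:\ZZ[\tfrac{1}{n}]\to\ZZ/N\ZZ$ with kernel $N\ZZ[\tfrac{1}{n}]$. Under this reduction the defining divisibility condition becomes the linear congruence $(\sigma(\alpha)-1)(n-1)\sigma(\gamma)\equiv 0\pmod N$, so $S=\sigma^{-1}(T)$ for the finite subgroup $T\le(\ZZ/N\ZZ)^*$ cut out by this congruence. As $T$ is finite, $S$ has finite index in $\ZZ[\tfrac{1}{n}]^*\cong\ZZ/2\ZZ\times\ZZ^{r}$, so $\stab(\gamma,c)\cong S$ is a finitely generated abelian group of the same free rank $r$, namely the number of distinct primes dividing $n$.

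For the computability of a finite generating set I would proceed in two layers. First, enumerate the elements of $T$ by testing each element of the finite group $(\ZZ/N\ZZ)^*$ against the linear congruence, and lift each of its generators to $\ZZ[\tfrac{1}{n}]^*=\{\pm 1\}\times\langle p_{1}\rangle\times\cdots\times\langle p_{r}\rangle$. Second, compute generators of $\ker\sigma$ by determining, for each prime $p_{j}\mid n$, the multiplicative order $m_{j}$ of $p_{j}$ modulo $N$ (so that $p_{j}^{m_{j}}\in\ker\sigma$), together with $-1$ if it reduces to $1\bmod N$. The union of these two finite sets generates $S$, and pulling back via $\alpha\mapsto\varphi_{\alpha,\beta(\alpha)}$ yields an explicit finite generating set for $\stab(\gamma,c)$.

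The main obstacle I anticipate is making the reduction-mod-$N$ step rigorous: one must verify carefully that divisibility by $N$ in the localization $\ZZ[\tfrac{1}{n}]$ is genuinely detected by the quotient $\ZZ/N\ZZ$, which hinges on the coprimality $\gcd(N,n)=1$ and on the identity $N\ZZ[\tfrac{1}{n}]\cap\ZZ=N\ZZ$. Once this reduction is in place, both the finite-index conclusion and the rank count become essentially automatic, and the remainder is routine bookkeeping inside a finitely generated abelian group.
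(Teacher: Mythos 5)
Your structural argument is correct and in fact reaches the rank statement by a somewhat different route than the paper: you reduce the divisibility condition modulo $N=n^{c}-1$ via the ring surjection $\sigma:\ZZ\left[\tfrac{1}{n}\right]\to\ZZ/N\ZZ$ (legitimate since $\gcd(N,n)=1$, so $N\ZZ\left[\tfrac{1}{n}\right]\cap\ZZ=N\ZZ$), identify $\stab(\gamma,c)$ with the preimage of a finite subgroup $T\le(\ZZ/N\ZZ)^*$, and conclude it has finite index in $\ZZ\left[\tfrac{1}{n}\right]^*\cong\ZZ/2\ZZ\times\ZZ^{r}$, hence free rank exactly $r$. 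The paper instead bounds the rank by $r$ via the same injection $\varphi_{\alpha,\beta}\mapsto\alpha$ and then exhibits $r$ independent elements $p_i^{o(p_i)}$; your finite-index argument is cleaner for that part. (Minor point: for $c<0$ the quantity $n^{c}-1$ is not an integer; you should first invoke $\stab(g)=\stab(g^{-1})$, as the paper does, or observe that $\tfrac{n^{c}-1}{n-1}$ is a unit multiple of $\tfrac{n^{|c|}-1}{n-1}$.)

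The genuine gap is in your computability layer: the set you propose need not generate $\stab(\gamma,c)$. First, a generator of $T$ need not lie in the image of $\sigma$ restricted to $\ZZ\left[\tfrac{1}{n}\right]^*$ (that image is the subgroup of $(\ZZ/N\ZZ)^*$ generated by $-1$ and the $p_j$), so ``lift each of its generators'' is not always possible; you must first replace $T$ by $T\cap\im(\sigma|_{\ZZ[1/n]^*})$. Second, and more seriously, the kernel of $\sigma$ on units is in general strictly larger than the subgroup generated by the pure powers $p_j^{m_j}$ (together with $-1$ when $-1\equiv1$), because of multiplicative cross-relations $\prod_j p_j^{a_j}\equiv\pm1\pmod N$ with all $0<a_j<m_j$. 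Concretely, take $n=6$ and $(\gamma,c)=(1,1)$: here $N=5$, the congruence is vacuous, so $T=(\ZZ/5\ZZ)^*=\langle 2\rangle$ and $\stab(1,1)$ corresponds to all of $\ZZ\left[\tfrac{1}{6}\right]^*$; your set (lifting the generator $2$ to $2$, with kernel generators $2^{4},3^{4}$ and no $-1$ since $-1\not\equiv1\bmod 5$) generates only $\{2^{a}3^{4b}\}$ and misses, e.g., $\alpha=6$ (note $2\cdot3\equiv1\pmod 5$, so $6$ lies in the kernel but not in $\langle 2^4,3^4\rangle$), as well as $\alpha=3$ and $\alpha=-1$. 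This is exactly the difficulty the paper's algorithm is built to handle: besides $p_i^{o(p_i)}$ and $n^{o(n)}$ it enumerates \emph{all} bounded-exponent products $\prod_i p_i^{s_i}/n^{j}$ with $0\le s_i<o(p_i)$, $0\le j<o(n)$ lying in the required residue classes (the sets $\mathcal{L}^j$), which captures these cross-relations. Your approach can be repaired by computing a generating set of the full relation lattice of $(-1,p_1,\dots,p_r)$ modulo $N$ (brute force over the finite image, or Smith normal form) and lifting generators of $T\cap\im\sigma$, but as written the proposed generating set can be a proper subgroup of $\stab(\gamma,c)$.
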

\begin{proof}

  Let $\varphi=\begin{pmatrix} \alpha & \beta \\ 0 & 1 \end{pmatrix} \in \stab(\gamma,c)$ for some $\alpha\in \ZZ\left[\tfrac{1}{n}\right]^*$ and $\beta \in \ZZ\left[\tfrac{1}{n}\right]$ be a non-identity automorphism of $\BS(1,n).$  Then $\varphi(\gamma,c)=(\gamma,c)$ implies $(\alpha-1) \gamma + \beta \frac{n^c - 1}{n - 1}=0,$ which is same as ~\ref{eq:2}. Since $\stab(g)=\stab(g^{-1})$, we can assume $c \geq 0$.\\

  \textit{Case (i) : $c=0$.} \newline
  Putting $c=0$ in ~\ref{eq:2}, we have $(\alpha-1) \gamma=0.$ Since $(\gamma,c)$ is a non-trivial element of $\BS(1,n)$, we have $\alpha=1$. So $\stab(\gamma,0)= \left\{\begin{pmatrix} 1 & \beta \\ 0 & 1 \end{pmatrix} \middle | \beta \in \mathbb{Z}\left[\frac{1}{n}\right]\right\} \cong \mathbb{Z}\left[\frac{1}{n}\right]$, which is an infinitely generated abelian group. \\

  \textit{Case (ii) : $c > 0$.} \newline
  As observed earlier, in order for $\varphi=\begin{pmatrix} \alpha & \beta \\ 0 & 1 \end{pmatrix}$ to be in $\stab(\gamma,c)$, $\alpha, \beta$ must satisfy equation ~\ref{eq:2}. Thus since $\alpha$ uniquely determines the value of $\beta$, we have an injective homomorphism $\stab(\gamma,c) \to \ZZ\left[\tfrac{1}{n}\right]^*$, sending $\varphi=\begin{pmatrix} \alpha & \beta \\ 0 & 1 \end{pmatrix} \mapsto \alpha$. Since $\ZZ\left[\tfrac{1}{n}\right]^* \cong \mathbb{Z}/2\mathbb{Z} \times \mathbb{Z}^r$, where $r$ is the number of distinct primes present in $n$, we conclude that $\stab(\gamma,c)$ is a finitely generated abelian group with rank $\leq r.$ Now, we give an algorithm to compute a finite set of generators for $\stab(\gamma,c)$. In equation ~\ref{eq:2}, let $\frac{n^c-1}{n-1}=\mu$. So, we have $(\alpha-1)\gamma+\mu\beta=0$ where, $\alpha,\beta$ are the unknowns. Since, $\ZZ\left[\tfrac{1}{n}\right]$ is a Euclidean Domain, there exists \(d\in\mathbb{Z}\!\left[\tfrac{1}{n}\right]\) with \(d\) a greatest common divisor of \(\gamma,\mu\) (unique up to a unit), and after dividing by \(d\) and clearing denominators one obtains an equation \begin{equation}\label{eq:3}
     (\alpha-1)\gamma'+\mu'\beta=0  
  \end{equation} with \(\mu'\in\mathbb{Z}\) and \(\gcd(\mu',n)=1\) (as $\gcd(\mu,n)=1$) . Clearly, $(\alpha-1)=D\mu'$ and $\beta=-D\gamma'$ where $D\in \ZZ\left[\tfrac{1}{n}\right]$ are all the solutions of equation ~\ref{eq:3}. Consider the part $(\alpha-1)=D\mu'$. This implies $\alpha=1+D\mu'=1+\frac{x}{n^y}\mu'=\frac{n^y+x\mu'}{n^y}$ where $x,y\in \ZZ$ and $y\ge0$. Since, $\alpha, n^y\in \ZZ\left[\tfrac{1}{n}\right]^*$, $n^y+x\mu'$ should also be invertible. Note that $n^y+x\mu'\in \ZZ$. Therefore we have, $n^y+x\mu'=\prod_{j=1}^r p_j^{z_j}$ where $z_i\ge0$. This implies \begin{equation}\label{eq:4}
    n^y\equiv\prod_{j=1}^r p_j^{z_j}(\text{mod}\, \mu')  
  \end{equation}
  Note that $\gcd(\mu',p_i)=1$ for $1\le i\le r$ which implies $\exists o(p_i)\in \N$ such that $p_i^{o(p_i)}\equiv1 (\text{mod}\, \mu')$ for $1\le i\le r$. Also, note that $\gcd(\mu',n)=1$, which means $\exists o(n)\in \N$ such that $n^{o(n)}\equiv1 (\text{mod}\, \mu')$. Now, for $1\le j\le o(n)-1$, define $$\mathcal{L}^j=\left\{(s_1,..,s_r)\middle| 0\le s_i\le o(p_i)-1 \: \forall 1\le i\le r \,\text{such that}\, \prod_{i=1}^r p_i^{s_i}\equiv n^j(\text{mod}\, \mu')\right\}.$$ We claim that the set $\left\{\begin{pmatrix}
      \alpha &-\tfrac{\gamma}{\mu}(\alpha-1)\\0& 1
  \end{pmatrix}\in \Aut(\BS(1,n))\middle| \alpha\in \mathcal{S}\right\}$ generates $\stab(\gamma,c)$, where $$\mathcal{S}=\left\{p_i^{o(p_i)}\middle| 1\le i\le r\right\}\bigcup \left\{n^{o(n)}\right\}\bigcup\bigcup_{j=0}^{o(n)-1}\left\{\frac{\prod_{i=1}^{r}p_i^{s_i}}{n^j}\middle |(s_1,...,s_r)\in \mathcal{L}^j\right\}.$$ Let $\varphi\in \stab (\gamma,c)\leq \Aut (\BS(1,n))$ where $\varphi=\begin{pmatrix} \alpha & \beta \\ 0 & 1 \end{pmatrix}$ where $\alpha\in \ZZ\left[\tfrac{1}{n}\right]^*$. We know from previous discussion, $\alpha=\frac{n^y+x\mu'}{n^y}$ and from ~\ref{eq:4}, $n^y\equiv\prod_{i=1}^{r}p_i^{z_i} (\text{mod}\, \mu')\implies n^{j+\lambda o(n)}\equiv\prod_{i=1}^{r}p_i^{s_i+\lambda_i o(p_i)} (\text{mod}\, \mu') $ for some $\lambda, \lambda_1,...,\lambda_r\in \ZZ \implies n^j\equiv\prod_{i=1}^{r}p_i^{s_i} (\text{mod}\, \mu')\implies (s_1,...,s_r)\in \mathcal{L}^j$ and $$\alpha=\frac{\prod_{i=1}^{r}p_i^{z_i}}{n^y}=\frac{\prod_{i=1}^{r}p_i^{s_i+\lambda_i o(p_i)}}{n^{j+\lambda o(n)}}=\frac{\prod_{i=1}^{r}p_i^{s_i}}{n^j}\prod_{i=1}^{r}\left (p_i^{o(p_i)}\right)^{\lambda_i}\left(\frac{1}{n^{o(n)}}\right)^\lambda.$$ We also need to verify if the generators stabilize $(\gamma,c)$. We know that any $\varphi=\begin{pmatrix}
      \alpha& \beta\\0 & 1
  \end{pmatrix}\in \stab (\gamma,c)$ if and only if $\alpha,\beta$ satisfies ~\ref{eq:2}. So, to prove our claim, we just need to check, if for $\alpha\in \mathcal{S}$, $\beta=- \frac{\gamma}{\mu}(\alpha-1)\in \ZZ\left[\tfrac{1}{n}\right]$. When $\alpha=\frac{\prod_{i=1}^{r}p_i^{s_i}}{n^j}$, $\beta=- \frac{\gamma}{\mu}\left(\frac{\prod_{i=1}^{r}p_i^{s_i}}{n^j}-1\right)= - \frac{\gamma'}{\mu'}\left(\frac{\prod_{i=1}^{r}p_i^{s_i}-n^j}{n^j}\right) $. Since, $(s_1,..,s_r)\in \mathcal{L}^j$, $\exists x\in \ZZ$ such that $\prod_{i=1}^{r}p_i^{s_i}-n^j=x\mu'\implies x=\frac{\prod_{i=1}^{r}p_i^{s_i}-n^j}{\mu'}\in \ZZ$. Therefore, $\beta=-\frac{\gamma'}{n^j}x\in \ZZ\left[\tfrac{1}{n}\right]$. If $\alpha= p_i^{o(p_i)}$, $\beta=- \frac{\gamma}{\mu}(p_i^{o(p_i)}-1)$. Like in the last case, we have an integer $t=\frac{p_i^{o(p_i)}-1}{\mu'}$ and hence, $\beta=- \gamma't\in \ZZ\left[\tfrac{1}{n}\right] .$ Similar computation shows $\beta\in \ZZ\left[\tfrac{1}{n}\right] $ for $\alpha=n^{o(n)}$. So, the set $\left\{\begin{pmatrix}
      \alpha & - \frac{\gamma}{\mu}(\alpha-1)\\0& 1
  \end{pmatrix}\in \Aut(\BS(1,n))\middle| \alpha\in \mathcal{S}\right\}$ generates $\stab(\gamma,c)$ and is clearly a finite set. Now, we know that the rank of $ \stab (\gamma,c))$ is less than or equal to $ r$. Notice that the subset $\left\{\begin{pmatrix}
   p_i^{o(p_i)}    &- \frac{\gamma}{\mu}(p_i^{o(p_i)}-1)\\0& 1
  \end{pmatrix}\middle|1\le i\le r\right\}$ of the generating set is linearly independent and has rank $r$. Hence, when $c\neq 0$, $\stab(\gamma,c)$ is an abelian group with rank $r$.
\end{proof}

\begin{cor}
    Let $H$ be a finitely generated subgroup of $\BS(1,n)$. Then $\stab(H)$ is either trivial or $\stab(H) \cong \mathbb{Z}\left[\frac{1}{n}\right]$ or it is a finitely generated abelian group with rank equals to the number of distinct primes present in $n$ and in this case, a finite set of generators is computable.
\end{cor}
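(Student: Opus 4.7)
The plan is to reduce the corollary to the structural dichotomy provided by Lemma~\ref{subgroups}, namely that every finitely generated subgroup $H$ of $\BS(1,n)$ is either infinite cyclic or of finite index, and then to invoke Theorem~\ref{stab} for the cyclic case and perform a short direct computation for the finite-index case.

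\textbf{Infinite cyclic case.} If $H = \langle g \rangle$ for some non-trivial $g \in \BS(1,n)$, I would first note that $\varphi(g) = g$ forces $\varphi(g^k) = g^k$ for every $k \in \mathbb{Z}$, so $\stab(H) = \stab(g)$. Theorem~\ref{stab} then yields the two non-trivial alternatives in the corollary: $\stab(H) \cong \mathbb{Z}\!\left[\tfrac{1}{n}\right]$ when $g = (\gamma, 0)$, and $\stab(H)$ is a finitely generated abelian group of rank equal to the number of distinct prime divisors of $n$, with a computable generating set, when $g = (\gamma, c)$ with $c \neq 0$.

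\textbf{Finite-index case.} Here Lemma~\ref{subgroups} supplies a two-element generating set $H = \langle (\gamma, k), (\gamma'', 0) \rangle$ with $k \neq 0$ and $\gamma'' \neq 0$. Any $\varphi = \begin{pmatrix} \alpha & \beta \\ 0 & 1 \end{pmatrix}$ lying in $\stab(H)$ must satisfy the fixed-point relation~\ref{eq:2} for both generators. Applied to $(\gamma'', 0)$, this gives $(\alpha - 1)\gamma'' = 0$ and hence $\alpha = 1$; substituting back into~\ref{eq:2} for $(\gamma, k)$ then gives $\beta \tfrac{n^k - 1}{n - 1} = 0$, which forces $\beta = 0$ because $k \neq 0$. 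Thus $\varphi$ is the identity and $\stab(H)$ is trivial, recovering the first option in the corollary.

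Since the substantive work has already been done in Theorem~\ref{stab} and Lemma~\ref{subgroups}, I do not foresee any real obstacle; the only care required is to observe that in the finite-index case the generating pair produced by Lemma~\ref{subgroups} genuinely satisfies $\gamma'' \neq 0$ and $k \neq 0$, which is precisely how that dichotomy is set up, and that the three listed outcomes in the corollary correspond exactly to the three subcases (trivial from finite index, $\mathbb{Z}[\tfrac{1}{n}]$ from a purely kernel generator, finitely generated of rank $r$ from a generator with non-trivial $t$-component).
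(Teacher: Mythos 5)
Your proposal is correct, and it shares the paper's skeleton: reduce via the dichotomy of Lemma~\ref{subgroups}, then quote Theorem~\ref{stab} in the infinite cyclic case. Where you genuinely diverge is the finite-index case. The paper argues abstractly: if $\varphi \in \stab(H)$ then $H \le \Fix\varphi$, so $\Fix\varphi$ has finite index in $\BS(1,n)$; since Theorem~\ref{fixed} says a non-trivial automorphism has $\Fix\varphi \cong \ZZ$ or $\ZZ\left[\tfrac{1}{n}\right]$, neither of which has finite index, $\varphi$ must be the identity. You instead work with the explicit pair $(\gamma,k)$, $(\gamma'',0)$ with $k \neq 0$, $\gamma'' \neq 0$ coming from the proof of Lemma~\ref{subgroups}, and solve equation~\eqref{eq:2} for each: $(\alpha-1)\gamma''=0$ forces $\alpha=1$, and then $\beta\frac{n^k-1}{n-1}=0$ forces $\beta=0$. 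Your route is more elementary (it does not invoke the classification of fixed subgroups at all, and in fact only needs that $H$ \emph{contains} a non-trivial element of $\ker\psi$ and an element with non-zero $t$-exponent, not that these generate $H$), while the paper's route is shorter given Theorem~\ref{fixed} and works without inspecting a generating set. The one point you should state cleanly is the case exhaustiveness: the proof of Lemma~\ref{subgroups} shows every finitely generated $H$ is either cyclic or of the two-generator form with $k\neq 0$, $\gamma''\neq 0$ (a finite-index subgroup cannot be cyclic since $\BS(1,n)$ is not virtually $\ZZ$), which is exactly the observation you flag at the end, so there is no real gap.
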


\begin{proof}
    Let $H$ be a finitely generated subgroup of $\BS(1,n)$. Then by Lemma ~\ref{subgroups}, there are two cases to consider. \\

    \textit{Case (i) : $H$ is finite index in $\BS(1,n).$} \newline
    Let $\varphi \in \stab(H),$ then $H \leq \Fix(\varphi) \leq \BS(1,n).$ Since $H$ is finite index in $\BS(1,n)$, $\Fix(\varphi)$ is also finite index in $\BS(1,n).$ But by Theorem ~\ref{fixed}, we have that either $\Fix\varphi\cong\ZZ$ or $\Fix\varphi\cong \ZZ \left[\tfrac{1}{n}\right]$, neither of which are finite index in $\BS(1,n).$ So $\Fix(\varphi)=\BS(1,n)$, which implies that $\varphi$ is the identity automorphism. Hence, in this case, $\stab(H)$ is trivial. \\

    \textit{Case (ii) : $H=\langle g\rangle$ is infinite cyclic, where $g \in \BS(1,n).$} \newline
    In this case, $\stab(H)=\stab(g)$ and by Theorem ~\ref{stab},  it is $\cong \mathbb{Z}\left[\frac{1}{n}\right]$ or it is a finitely generated abelian group with rank equals to the number of distinct primes present in $n$ and in this case, a finite set of generators is computable.
    
\end{proof}

Following the terminology from \cite{RV}, the auto-fixed closure of $H$ in $G$, 
denoted $\mathrm{Cl}(H)$, is the subgroup of $G$ defined by
\[
\mathrm{Cl}(H)
= \Fix(\stab(H))
= \bigcap_{\substack{\varphi \in \Aut(G) \\ H \le \Fix \varphi}}
\Fix \varphi .
\]

\begin{thm}\label{afix}
Let $H$ be a finitely generated subgroup of $\BS(1,n)$. \\
(i) If $\stab(H)$ is trivial, then $\mathrm{Cl}(H)=\BS(1,n)$ \newline
(ii) If $\stab(H) \cong \mathbb{Z}\left[\frac{1}{n}\right]$, then $\mathrm{Cl}(H)=\left\{(\gamma,0) \mid \gamma \in \mathbb{Z}\left[\frac{1}{n}\right]\right \}$ \newline
(iii) If $\stab(H)$ is a finitely generated abelian group, then $\mathrm{Cl}(H) \cong \mathbb{Z}$, and a generator is computable.
\end{thm}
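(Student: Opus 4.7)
The plan is to treat the three cases separately, drawing on the structural descriptions of $\stab(H)$ from Theorem~\ref{stab} and its corollary together with the description of $\Fix(\varphi)$ from Theorem~\ref{fixed}. Case (i) is immediate from the definition of the auto-fixed closure: if $\stab(H)=\{\mathrm{id}\}$, then $\mathrm{Cl}(H)=\Fix(\mathrm{id})=\BS(1,n)$.

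For Case (ii), I would first argue that the hypothesis $\stab(H)\cong\mathbb{Z}\!\left[\tfrac{1}{n}\right]$ forces a very specific shape of $H$. By Lemma~\ref{subgroups}, $H$ is either finite index in $\BS(1,n)$ (ruled out, since the corollary forces the stabilizer to be trivial in this case) or infinite cyclic. Writing $H=\langle(\gamma,c)\rangle$, Theorem~\ref{stab} gives a finitely generated stabilizer when $c\neq 0$, so necessarily $c=0$. Hence $\stab(H)$ consists precisely of the matrices $\begin{pmatrix}1 & \beta\\ 0 & 1\end{pmatrix}$, $\beta\in\mathbb{Z}\!\left[\tfrac{1}{n}\right]$, and by Case~(i) of Theorem~\ref{fixed} each non-identity such element has $\Fix(\varphi)=\{(\gamma',0)\mid \gamma'\in\mathbb{Z}\!\left[\tfrac{1}{n}\right]\}$. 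These fixed subgroups all coincide, so their intersection is exactly this group.

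For Case (iii), the same dichotomy and the hypothesis that $\stab(H)$ is finitely generated (of rank $r\geq 1$, hence non-trivial) force $H=\langle(\gamma,c)\rangle$ with $c\neq 0$. For any non-identity $\varphi=\begin{pmatrix}\alpha & \beta\\ 0 & 1\end{pmatrix}\in\stab(H)$, the injectivity of the map $\varphi\mapsto\alpha$ established in the proof of Theorem~\ref{stab} gives $\alpha\neq 1$; Case~(ii) of Theorem~\ref{fixed} then yields $\Fix(\varphi)\cong\mathbb{Z}$. Since $\mathrm{Cl}(H)\subseteq\Fix(\varphi)$ while $H\subseteq\mathrm{Cl}(H)$ is non-trivial, $\mathrm{Cl}(H)$ is a non-trivial subgroup of $\mathbb{Z}$ and hence isomorphic to $\mathbb{Z}$.

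For computability in Case (iii), I would use the finite generating set $\{\varphi_1,\ldots,\varphi_k\}$ of $\stab(H)$ provided by Theorem~\ref{stab}, compute a generator $g_i$ of each $\Fix(\varphi_i)$ using the algorithm in Theorem~\ref{fixed}, and then intersect the cyclic subgroups $\langle g_i\rangle$ inside the ambient $\langle g_1\rangle\cong\mathbb{Z}$. Concretely, for each $i\geq 2$ one finds the least positive integer $m_i$ with $g_1^{m_i}\in\langle g_i\rangle$ (computable from the explicit power formula in Lemma~\ref{basic}) and takes the least common multiple of the $m_i$. The main subtlety is verifying that the intersection over all of $\stab(H)$ equals the intersection over the generators, which follows from the elementary inclusion $\Fix(\varphi\circ\psi)\supseteq\Fix(\varphi)\cap\Fix(\psi)$; this is where I expect the bulk of the bookkeeping to lie.
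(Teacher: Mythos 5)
Your proposal is correct and follows essentially the same route as the paper: reduce $\mathrm{Cl}(H)=\Fix(\stab(H))$ to the intersection of the fixed subgroups of a finite generating set of $\stab(H)$, apply Theorem~\ref{fixed} in each case, and in case (iii) obtain a cyclic intersection via an lcm. The only cosmetic difference is that you compute the generator as $g_1^{\lcm(m_i)}$ using the indices $m_i$ of $\langle g_1\rangle\cap\langle g_i\rangle$, whereas the paper takes the element whose $t$-exponent is $\lcm(c_1,\ldots,c_r)$ --- the same computation in substance.
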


\begin{proof}
    We will consider a few cases. \\

    \textit{Case (i) : $\stab(H)$ is trivial.} \newline
    In this case, \[
\mathrm{Cl}(H)
= \Fix(\stab(H))
=\Fix(\text{Id})
=\BS(1,n).
\] \\

\textit{Case (ii) : $\stab(H) \cong \mathbb{Z}\left[\frac{1}{n}\right]$.} \newline
From the proof of ~\ref{stab}, we observe that $H \leq H',$ where $H'=\left\{(\gamma,0) \middle| \gamma \in \mathbb{Z}\left[\frac{1}{n}\right]\right\}$ and $\stab(\gamma,0)= \left\{\begin{pmatrix} 1 & \beta \\ 0 & 1 \end{pmatrix} \middle | \beta \in \mathbb{Z}\left[\frac{1}{n}\right]\right\} \cong \mathbb{Z}\left[\frac{1}{n}\right]$, for any $\gamma \in \mathbb{Z}\left[\frac{1}{n}\right]$. So \[
\mathrm{Cl}(H)
= \Fix(\stab(H))
= \Fix\left(\left\{\begin{pmatrix} 1 & \beta \\ 0 & 1 \end{pmatrix} \middle | \beta \in \mathbb{Z}\left[\frac{1}{n}\right]\right\}\right)=H'.
\] \\

\textit{Case (iii) : $\stab(H)$ is a finitely generated abelian group with rank equals to the number of distinct primes present in $n$.} \newline
In this case $\stab(H)=\left\langle \begin{pmatrix} \alpha_1 & \beta_1 \\ 0 & 1 \end{pmatrix},\ldots,\begin{pmatrix} \alpha_r & \beta_r \\ 0 & 1 \end{pmatrix}\right \rangle$, where $\alpha_i \neq 1 \forall 1 \leq i \leq r.$
So \[
\mathrm{Cl}(H)
= \Fix(\stab(H))
= \bigcap_{\substack{1 \leq i \leq r}}\Fix\left(\begin{pmatrix} \alpha_i & \beta_i \\ 0 & 1 \end{pmatrix}\right)
\]
Now, by Theorem ~\ref{fixed}, we have that $\Fix\left(\begin{pmatrix} \alpha_i & \beta_i \\ 0 & 1 \end{pmatrix}\right) \cong \mathbb{Z}$ and a generator $(\gamma_i,c_i)$ is computable. Then  \[
\mathrm{Cl}(H)
= \Fix(\stab(H))
= \bigcap_{\substack{1 \leq i \leq r}}\Fix\left(\begin{pmatrix} \alpha_i & \beta_i \\ 0 & 1 \end{pmatrix}\right) = \langle (\gamma_k, c_k) \rangle \cong \mathbb{Z},  
\] where $c_k=\lcm(c_1,...,c_r)$ and $\gamma_k$ is uniquely determined by $c_k$. 
\end{proof}

We have a similar result as Theorem ~\ref{per} for stabilizers.

\begin{thm}\label{stab}
    For $(\gamma,c) \in \BS(1,n)$, $\stab(\gamma,c)=\stab(\gamma,c)^k,$ for all $k \geq 1.$
\end{thm}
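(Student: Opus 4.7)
The plan is to prove the two inclusions separately. The forward inclusion $\stab((\gamma,c)) \subseteq \stab((\gamma,c)^k)$ is immediate: any group homomorphism that fixes an element fixes all its powers. The substantive content is therefore the reverse inclusion, and the natural strategy is to use the explicit power formula in Lemma~\ref{basic}(ii) together with the defining condition~\ref{eq:2} for an automorphism to stabilize an element.

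Concretely, I would first write $(\gamma,c)^k = \left(\tfrac{n^{ck}-1}{n^c-1}\gamma,\, ck\right)$ when $c \neq 0$, and $(\gamma,c)^k = (k\gamma, 0)$ when $c = 0$. In the case $c = 0$, both $\stab((\gamma,0))$ and $\stab((k\gamma,0))$ are handled by Case~(i) of Theorem~\ref{stab}: for any nonzero element of $\mathbb{Z}[\tfrac{1}{n}]$, equation~\ref{eq:2} forces $\alpha = 1$, so both stabilizers equal $\left\{\begin{pmatrix}1 & \beta \\ 0 & 1\end{pmatrix} : \beta \in \mathbb{Z}[\tfrac{1}{n}]\right\}$. (The degenerate case $\gamma=0$ is trivial.)

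In the case $c \neq 0$, I would take $\varphi = \begin{pmatrix}\alpha & \beta \\ 0 & 1\end{pmatrix} \in \stab((\gamma,c)^k)$ and apply equation~\ref{eq:2} to the element $(\gamma,c)^k$, obtaining
\[
(\alpha - 1)\,\frac{n^{ck}-1}{n^c-1}\,\gamma \;+\; \beta\,\frac{n^{ck}-1}{n-1} \;=\; 0.
\]
Factoring out the common term $\frac{n^{ck}-1}{n^c-1} = 1 + n^c + n^{2c} + \cdots + n^{(k-1)c}$ reduces this to
\[
\frac{n^{ck}-1}{n^c-1}\left[(\alpha - 1)\gamma + \beta\,\frac{n^c-1}{n-1}\right] = 0,
\]
and since the factored scalar is a sum of positive powers of $n^c > 0$ (for any integer $c \neq 0$ and $k \geq 1$), it is nonzero. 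The bracket must then vanish, which is precisely equation~\ref{eq:2} for $(\gamma,c)$, giving $\varphi \in \stab((\gamma,c))$.

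There is no real obstacle here; the only point requiring care is verifying that the scalar $\tfrac{n^{ck}-1}{n^c-1}$ is nonzero across all sign choices of $c$, which follows because $n \geq 2$ makes every term $n^{jc}$ strictly positive. The argument also shows that the identity extends to all $k \in \mathbb{Z}\setminus\{0\}$ once combined with the observation $\stab(g) = \stab(g^{-1})$ already noted before the theorem.
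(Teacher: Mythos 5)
Your proof is correct and follows essentially the same route as the paper: both use Lemma~\ref{basic}(ii) to write $(\gamma,c)^k$, apply the fixed-point condition~\ref{eq:2} to the power, and cancel the nonzero factor $\tfrac{n^{ck}-1}{n^c-1}$ to recover the condition for $(\gamma,c)$ itself. The only (harmless) difference is that you treat all $c\neq 0$ uniformly by observing the factor is a sum of positive powers of $n$, whereas the paper reduces to $c>0$ via $\stab(g)=\stab(g^{-1})$.
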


\begin{proof}
    We will consider two cases. \\

    \textit{Case (i) : $c=0.$} \newline
    Note that by Lemma \ref{basic}, $(\gamma,0)^k=(k\gamma,0)$. By the proof of ~\ref{stab}, we saw that $\stab(\gamma,0)= \left\{\begin{pmatrix} 1 & \beta \\ 0 & 1 \end{pmatrix} \middle | \beta \in \mathbb{Z}\left[\frac{1}{n}\right]\right\}$, for all $\gamma \in \mathbb{Z}[\frac{1}{n}].$ Hence, $\stab(\gamma,0)=\stab(\gamma,0)^k,$ for all $k \geq 1.$ \\

    \textit{Case (ii) : $c>0.$} \newline
    Let $\varphi \in \stab(\gamma,c) \implies \varphi(\gamma,c)=(\gamma,c) \implies \varphi^k(\gamma,c)=(\gamma,c)$, for all $k \geq 1 \implies (\gamma,c) \in \stab(\gamma,c)^k,$ for all $k \geq 1.$ Hence, $\stab(\gamma,c) \subseteq \stab(\gamma,c)^k,$ for all $k \geq 1.$ \newline
    Now let $\varphi=\begin{pmatrix} \alpha & \beta \\ 0 & 1 \end{pmatrix} \in \stab(\gamma,c)^k$. Again by Lemma \ref{basic}, $(\gamma,c)^k=\left(\gamma\frac{n^{ck}-1}{n^c-1}, kc\right).$ Now, $\begin{pmatrix} \alpha & \beta \\ 0 & 1 \end{pmatrix}\left(\gamma\frac{n^{ck}-1}{n^c-1}, kc\right)=\left(\gamma\frac{n^{ck}-1}{n^c-1}, kc\right) \implies \alpha \gamma\frac{n^{ck}-1}{n^c-1}+ \beta \frac{n^{ck}-1}{n-1}=\gamma\frac{n^{ck}-1}{n^c-1}$. Since $ck \neq 0$, we have, $\alpha \gamma\frac{1}{n^c-1}+ \beta \frac{1}{n-1}=\gamma\frac{1}{n^c-1}$. Multiplying both sides by $n^c-1,$ we have $\alpha \gamma+ \beta \frac{n^c-1}{n-1}=\gamma \implies \begin{pmatrix} \alpha & \beta \\ 0 & 1 \end{pmatrix} (\gamma,c)=(\gamma,c) \implies \varphi \in \stab(\gamma,c) \implies \stab(\gamma,c)^k \subseteq \stab(\gamma,c).$ Hence, $\stab(\gamma,c)=\stab(\gamma,c)^k,$ for all $k \geq 1.$
    
    \end{proof}

The following corollary is an interesting consequence of the above theorem. Although, the result is previously known (see \text{Lemma 2.5}, \cite{Bo}), we give a different proof below.
\begin{cor}
    $\BS(1,n)$ has the unique root property, that is, if for any $g_1,g_2\in \BS(1,n)$ and any $k\in \N$, $g_1^k=g_2^k\implies g_1=g_2$. 
\end{cor}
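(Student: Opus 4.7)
The plan is to deduce the corollary from Theorem~\ref{stab} by using inner automorphisms to show that any two elements with equal $k$-th powers must commute, and then invoking torsion-freeness of $\BS(1,n)$ to conclude.

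Suppose $g_1^k = g_2^k$ in $\BS(1,n)$. The first step is to consider the inner automorphism $\iota(x) := g_2 x g_2^{-1}$ and recognize it as an element of $\Aut(\BS(1,n))$. For $g_2 = (\gamma_2, c_2)$, a short computation using the composition rule~(\ref{eq:1}) together with Lemma~\ref{basic}(i) shows that $\iota$ is represented by the matrix $\begin{pmatrix} n^{-c_2} & \gamma_2 n^{-c_2}(n-1) \\ 0 & 1 \end{pmatrix}$, which has the form described in Section~2 (noting $n^{-c_2}\in\ZZ[\tfrac{1}{n}]^{*}$). Since $\iota$ evidently fixes $g_2^k$ and $g_2^k = g_1^k$, we have $\iota \in \stab(g_1^k)$. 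The second step applies Theorem~\ref{stab} to obtain $\stab(g_1^k) = \stab(g_1)$, so $\iota(g_1) = g_1$; equivalently, $g_1 g_2 = g_2 g_1$.

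For the third step, I would record that $\BS(1,n)$ is torsion-free: by Lemma~\ref{basic}(ii), $(\gamma, c)^k = (0,0)$ with $k \geq 1$ forces $ck = 0$, hence $c = 0$; then $(\gamma, 0)^k = (k\gamma, 0) = (0,0)$ gives $\gamma = 0$ since $\ZZ[\tfrac{1}{n}]$ is torsion-free. Finally, since $g_1$ and $g_2$ commute, $(g_1 g_2^{-1})^k = g_1^k g_2^{-k} = e$, and torsion-freeness yields $g_1 g_2^{-1} = e$, i.e., $g_1 = g_2$.

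The only step that requires any care is verifying that the inner automorphism $\iota$ fits the matrix description of $\Aut(\BS(1,n))$ from Section~2, so that Theorem~\ref{stab} may legitimately be applied; but this is a mechanical unwinding of the semidirect-product multiplication. Everything else is routine.
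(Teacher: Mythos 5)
Your proof is correct and follows essentially the same route as the paper: apply the theorem $\stab(g)=\stab(g^k)$, use an inner automorphism to deduce that $g_1$ and $g_2$ commute, and finish with torsion-freeness of $\BS(1,n)$. The only cosmetic differences are that you conjugate by $g_2$ (needing just one application of the theorem, where the paper applies it to both $g_1$ and $g_2$) and that you spell out the matrix form of the inner automorphism and the torsion-freeness argument, which the paper leaves implicit.
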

\begin{proof}
 Suppose $\exists g_1,g_2\in \BS(1,n)$ such that $g_1^k=g_2^k=g$ for some $k\in \N$. By theorem ~\ref{stab}, we have $\stab (g_1)=\stab(g_1^k)$ and $\stab (g_2)=\stab(g_2^k)$ and therefore, $\stab (g_1)=\stab (g_2)$. Let $i_{g_1}$ denote the inner automorphism $i_{g_1}(x)=g_1xg_1^{-1}$ and note that $i_{g_1}\in \stab(g_1)$. This implies $i_{g_1}\in \stab(g_2)\implies i_{g_1}(g_2)=g_2\implies g_1g_2g_1^{-1}=g_2\implies g_1g_2=g_2g_1 $. Now, consider $(g_1g_2^{-1})^k=(g_1)^k(g_2)^{-k}=gg^{-1}=1$. Since, $\BS (1,n)$ is torsion-free, we have $g_1g_2^{-1}=1\implies g_1=g_2$.  
\end{proof}

\section{Fixed subgroups of endomorphisms and endo-stabilizers}

\begin{dfn}
Let $G$ be a group and $\End(G)$ be the monoid of its endomorphisms. Let $\varphi\in \End(G)$, then the fixed subgroup of $\varphi$ denoted by $\Fix\varphi$ is given by $\Fix\varphi=\left\{g\in G\middle| \varphi(g)=g\right\}$  
\end{dfn}

Clearly, $\Fix\varphi$ forms a group. The following theorem is an analog of Theorem \ref{fixed} in the case of endomorphisms. 

\begin{thm}\label{e-fix}
    Let $\varphi \in \End (\BS(1,n))$ be a non-identity endomorphism. Then $\Fix\varphi\cong\ZZ$ if $\varphi\in \End^{\mathrm{I}}(\BS(1,n))$ and $\alpha\neq 1$ or if $\varphi\in \End^{\mathrm{II}}(\BS(1,n))$ and $\varphi=\varphi_{(\gamma,1)}$ where $\gamma\in \ZZ\left[\frac{1}{n}\right] $, in which case $\Fix \varphi= \langle(\gamma,1)\rangle$. When $\varphi\in \End^{\mathrm{I}}(\BS(1,n))$ and $\alpha=1$, $\Fix\varphi\cong\ZZ\left[\frac{1}{n}\right]$ and for any $\varphi\in \End^{\mathrm{II}}(\BS(1,n)) $ not of the type $\varphi_{(\gamma,1)}$ , $\Fix\varphi=\{(0,0)\}$.
\end{thm}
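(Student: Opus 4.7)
The plan is to treat the two disjoint families $\End^{\mathrm{I}}(\BS(1,n))$ and $\End^{\mathrm{II}}(\BS(1,n))$ separately, since their action formulas on $\BS(1,n)$ are structurally different, and then combine the resulting four subcases into the statement.

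For $\varphi = \begin{pmatrix}\alpha & \beta \\ 0 & 1\end{pmatrix} \in \End^{\mathrm{I}}(\BS(1,n))$, the fixedness condition on $(\gamma,c)$ gives exactly the algebraic equation $(\alpha - 1)\gamma + \beta \frac{n^c - 1}{n - 1} = 0$ from the proof of Theorem~\ref{fixed}. The subcase $\alpha = 1$ is immediate: non-triviality forces $\beta \neq 0$, hence $c = 0$, and $\Fix\varphi$ becomes the full $\ZZ[\tfrac{1}{n}]$-slice at level zero. For $\alpha \neq 1$, the equation determines $\gamma$ uniquely from $c$, so $\Fix\varphi$ embeds into $\ZZ$ via the second coordinate, and I would adapt the argument of Theorem~\ref{fixed} to locate the smallest positive $c_0$ for which $-\frac{\beta}{\alpha-1}\frac{n^{c_0}-1}{n-1}$ lies in $\ZZ[\tfrac{1}{n}]$.

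For $\varphi_{(\gamma_0,c_0)} \in \End^{\mathrm{II}}(\BS(1,n))$, I would exploit the preliminary identity $\varphi_{(\gamma_0,c_0)}(\gamma',c') = (\gamma_0,c_0)^{c'}$ derived in Section~\S2. The degenerate case $c_0 = 0$ gives $\varphi_{(\gamma_0,0)}(\gamma',c') = (c'\gamma_0, 0)$, whose fixed set is trivially $\{(0,0)\}$. For $c_0 \neq 0$, Lemma~\ref{basic}(ii) rewrites the right-hand side as $\left(\frac{n^{c_0 c'}-1}{n^{c_0}-1}\gamma_0,\, c_0 c'\right)$ and reduces fixedness to the scalar relation $(c_0-1)c' = 0$ together with the induced equation on $\gamma'$. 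The case split is then transparent: $c_0 = 1$ makes every power of $(\gamma_0,1)$ fixed, yielding $\Fix\varphi = \langle(\gamma_0,1)\rangle \cong \ZZ$, while $c_0 \neq 1$ forces $c' = 0$ and then $\gamma' = 0$, so $\Fix\varphi$ is trivial.

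The main obstacle is the Type~I subcase with $\alpha \neq 1$ when $\alpha$ is not a unit in $\ZZ[\tfrac{1}{n}]$. In Theorem~\ref{fixed} the hypothesis $\alpha \in \ZZ[\tfrac{1}{n}]^*$ guaranteed that every prime of $\alpha$ divided $n$; here $\alpha - 1$ can carry prime factors coprime to $n$, so I would need to decompose $\alpha - 1 = u\, P'$ with $u \in \ZZ[\tfrac{1}{n}]^*$ absorbing the primes dividing $n$ and $P' \in \ZZ$ coprime to $n$, before reducing to an integer congruence $n^c \equiv 1 \pmod{M}$ for a suitable $M$ coprime to $n$, obtained by clearing the relevant $\gcd$s against $\beta$ and $n-1$. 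Euler's theorem then delivers $c_0$ as the order of $n$ modulo $M$, making the generator of $\Fix\varphi$ computable exactly as in the automorphism setting.
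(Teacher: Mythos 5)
Your proposal is correct and follows essentially the same route as the paper: for Type I you reuse equation~\eqref{eq:2}, splitting on $\alpha=1$ versus $\alpha\neq 1$ and reducing the latter to a congruence $n^{c}\equiv 1$ modulo an integer coprime to $n$ (your decomposition $\alpha-1=u\,P'$ with $u$ a unit is exactly the paper's factorization $n^{p}-l=\prod_{j}p_j^{s_j}A$ with $\gcd(A,n)=1$), and for Type II you use $\varphi_{(\gamma_0,c_0)}(\gamma',c')=(\gamma_0,c_0)^{c'}$ and the resulting relation $c_0c'=c'$. If anything, your Type II analysis is slightly more careful than the paper's, since you treat $c_0=0$ and the trivially fixed identity (the case $c'=0$) explicitly instead of passing directly to $c_0=1$.
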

 \begin{proof}
 \textit{Case i}: When $\varphi\in \End^{\mathrm{I}}(\BS(1,n))$ and $\alpha= 1$, the proof is the same as in the case for $\varphi\in \Aut(\BS(1,n))$. Therefore, we have $\Fix\varphi\cong\ZZ\left[\frac{1}{n}\right]$. \\
 
 \textit{Case~ii.} Suppose $\varphi \in \End^{\mathrm{I}}(\BS(1,n))$ and $\alpha \neq 1$.
The proof is similar to the case $\varphi \in \Aut(\BS(1,n))$; the only difference is that
here $\alpha \in \mathbb{Z}\!\left[\tfrac{1}{n}\right]$ rather than $\mathbb{Z}\!\left[\tfrac{1}{n}\right]^{\!*}$. By equation~\eqref{eq:2}, the map $\BS(1,n) \longrightarrow \mathbb{Z}, \qquad (\gamma,c) \longmapsto c$, when restricted to $\Fix \varphi$, is injective, and hence $\Fix \varphi \leq \mathbb{Z}$. For an element $(\gamma,c)$ to lie in $\Fix \varphi$, equation~\eqref{eq:2} gives
$\gamma = -\frac{\beta}{\alpha-1}\,\frac{n^{c}-1}{n-1}$ and this must belong to $\mathbb{Z}\!\left[\tfrac{1}{n}\right]$. Thus we must determine values of $c$ for which $\gamma \in \mathbb{Z}\!\left[\tfrac{1}{n}\right]$. Since $\alpha \in \mathbb{Z}\!\left[\tfrac{1}{n}\right]$, we may write
$\alpha = \tfrac{l}{n^{p}}$ for some $l,p \in \mathbb{Z}$ with $p>0$, and similarly
$\beta = \tfrac{m}{n^{q}}$ for some $m,q \in \mathbb{Z}$ with $q>0$. Substituting these expressions for $\alpha$ and $\beta$, we obtain $\gamma
= -\frac{(n^{c}-1) m n^{p}}{(n-1)(n^{p}-l)n^{q}}
= -\frac{(n^{c}-1) m n^{p-q}}{\big(\prod_{j} p_j^{\,s_j}\big)(n-1)A},$ where we write
$n^{p}-l = \prod_{j} p_j^{\,s_j} A$, with $p_j$ ranging over the prime divisors of $n$, $s_j \ge 0$, and $\gcd(n,A)=1$. Using the same argument as in the automorphism case, we conclude that
$\Fix \varphi \cong \mathbb{Z}$.\\

\textit{Case iii}: When $\varphi \in \End^{\mathrm{II}}(\BS(1,n))$, then $\varphi=\varphi_{(\gamma,c)}$ for some $(\gamma,c)\in \BS(1,n)$. If $(\gamma',c')\in \Fix\varphi$, then $\varphi_{(\gamma,c)}(\gamma',c')=(\gamma',c')\implies\left(\tfrac{n^{cc'}-1}{n^c-1}\gamma,cc'\right)=(\gamma',c')\implies c=1$ and $\tfrac{n^{cc'}-1}{n^c-1}\gamma=\gamma'$. Firstly, this means $\Fix\varphi\neq \emptyset\implies c=1$, that is, if $\varphi=\varphi_{(\gamma,1)}$ for some $\gamma\in \mathbb{Z}\!\left[\tfrac{1}{n}\right] $. This also means $\left(\tfrac{n^{c'}-1}{n-1}\gamma,c'\right)\in \Fix\varphi$ for all $c'\in\ZZ$, in particular, $(\gamma,1)\in \Fix\varphi$. Therefore, $\Fix\varphi=\langle(\gamma,1)\rangle$ as $c'$ uniquely determines $\gamma'$ by \ref{eq:2}. So, we conclude $\Fix\varphi\neq \emptyset$ if and only if $\varphi=\varphi_{(\gamma,1)}$ for some $\gamma\in \mathbb{Z}\!\left[\tfrac{1}{n}\right] $, in which case, $\Fix\varphi=\langle(\gamma,1)\rangle\cong \ZZ$.
     
 \end{proof}

\begin{dfn}
   Let $G$ be a group and $\End(G)$ be the monoid of endomorphisms of $G$. Let $\varphi\in \End(G)$, we define its periodic subgroup as \[
\operatorname{Per}(\varphi)=\bigcup_{k=1}^\infty \operatorname{Fix}(\varphi^k).
\qquad
\]
\end{dfn}

Note that $\operatorname{Per}(\varphi)$ is indeed a subgroup $\text{since } x\in \Fix(\varphi^{k}) \text{ and } y\in \Fix(\varphi^{k'}) 
\text{ imply } xy\in \Fix(\varphi^{kk'})$.

\begin{thm} \label{per}
  Let $\varphi\in \End(\BS(1,n))$ be an endomorphism, then $\operatorname{Per}(\varphi)=\operatorname{Fix}(\varphi)$. In fact, $\operatorname{Fix}(\varphi)=\operatorname{Fix}(\varphi^k), \forall k \geq 1.$
\end{thm}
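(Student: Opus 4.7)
The plan is to prove the stronger equality $\Fix(\varphi)=\Fix(\varphi^{k})$ for every $k\geq 1$; the identity $\Per(\varphi)=\Fix(\varphi)$ then follows immediately by taking the union. One inclusion, $\Fix(\varphi)\subseteq\Fix(\varphi^{k})$, is clear from iterating $\varphi(g)=g$. For the reverse inclusion, I would split along the dichotomy $\End(\BS(1,n))=\End^{\mathrm{I}}(\BS(1,n))\sqcup\End^{\mathrm{II}}(\BS(1,n))$ from the preliminaries and compute $\varphi^{k}$ explicitly in each case so that the fixed-point condition for $\varphi^{k}$ can be reduced to the one for $\varphi$.

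For $\varphi\in\End^{\mathrm{I}}(\BS(1,n))$ with matrix $\begin{pmatrix}\alpha&\beta\\0&1\end{pmatrix}$, matrix multiplication yields
\[
\varphi^{k}=\begin{pmatrix}\alpha^{k}&\beta\,\sigma_{k}(\alpha)\\0&1\end{pmatrix},\qquad \sigma_{k}(\alpha)=1+\alpha+\cdots+\alpha^{k-1}.
\]
Substituting into equation~\eqref{eq:2} for $\varphi^{k}$ and using $\alpha^{k}-1=(\alpha-1)\sigma_{k}(\alpha)$, the fixed-point condition on $(\gamma,c)$ factors as
\[
\sigma_{k}(\alpha)\left[(\alpha-1)\gamma+\beta\tfrac{n^{c}-1}{n-1}\right]=0.
\]
Since $\ZZ[\tfrac{1}{n}]$ is an integral domain, once one shows $\sigma_{k}(\alpha)\neq 0$ the bracket must vanish, which is exactly the fixed-point equation for $\varphi$, placing $(\gamma,c)$ in $\Fix(\varphi)$. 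The subcase $\alpha=1$ reduces to $\sigma_{k}(1)=k\neq 0$, while for $\alpha\neq 1$ the vanishing of $\sigma_{k}(\alpha)$ would force $\alpha$ to be a nontrivial $k$-th root of unity in $\ZZ[\tfrac{1}{n}]$, which restricts attention to a short list of exceptional scenarios.

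For $\varphi=\varphi_{(\gamma,c)}\in\End^{\mathrm{II}}(\BS(1,n))$, iterating the Type II composition rule from the preliminaries yields
\[
\varphi^{k}=\varphi_{\bigl(\frac{n^{c^{k}}-1}{n^{c}-1}\gamma,\;c^{k}\bigr)}.
\]
When $c=1$ this gives $\varphi^{k}=\varphi$, so the claim is immediate. When $c\neq 1$ and $c^{k}\neq 1$, Theorem~\ref{e-fix} applied to $\varphi^{k}$ forces $\Fix(\varphi^{k})=\{(0,0)\}=\Fix(\varphi)$.

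The main obstacle is the control on roots of unity: ruling out $\sigma_{k}(\alpha)=0$ in the Type I case and $c^{k}=1$ in the Type II case. Both reduce to the observation that the only roots of unity in $\ZZ[\tfrac{1}{n}]$ are $\pm 1$, so the sole sporadic possibilities are $\alpha=-1$ (resp.\ $c=-1$) paired with even $k$; these must be addressed separately by direct inspection of $\varphi^{k}$ on the candidate fixed elements before the general factorization argument can be cleanly invoked.
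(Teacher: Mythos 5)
Your strategy is essentially the paper's own: compute $\varphi^{k}$ explicitly for each of the two types of endomorphisms and reduce the fixed-point equation of $\varphi^{k}$ to that of $\varphi$. Moreover, you have correctly isolated the one delicate point, which the paper's argument passes over silently when it divides by $\alpha^{k}-1$ in the Type I case and asserts $c^{k}\neq 1$ in the Type II case. The problem is that the exceptional cases you propose to ``address separately by direct inspection'' cannot be disposed of, because the asserted equality is false there. Take $\varphi=\varphi_{-1,\beta}\in\End^{\mathrm{I}}(\BS(1,n))$ (in fact an automorphism, since $-1\in\ZZ[\tfrac{1}{n}]^{*}$): the composition rule $\varphi_{\alpha,\beta}\circ\varphi_{\alpha',\beta'}=\varphi_{\alpha\alpha',\alpha\beta'+\beta}$ gives $\varphi^{2}=\varphi_{1,0}=\mathrm{Id}$, so $\Fix(\varphi^{2})=\BS(1,n)$, whereas $\Fix(\varphi)$ is a proper subgroup (for $\beta=0$ it is $\{(0,c)\mid c\in\ZZ\}\cong\ZZ$). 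Hence $\Fix(\varphi)\subsetneq\Fix(\varphi^{2})$ and $\Per(\varphi)=\BS(1,n)\neq\Fix(\varphi)$. Likewise in Type II with $c=-1$: the composition rule gives $\varphi_{(\gamma,-1)}^{2}=\varphi_{(-n\gamma,1)}$, whose fixed subgroup is $\langle(-n\gamma,1)\rangle\cong\ZZ$ by Theorem~\ref{e-fix}, while $\Fix(\varphi_{(\gamma,-1)})=\{(0,0)\}$.

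So the gap in your proposal is not a missing verification but an impossible one. Your factorization argument proves $\Fix(\varphi)=\Fix(\varphi^{k})$ exactly when $\sigma_{k}(\alpha)\neq 0$ (Type I), respectively $c^{k}\neq 1$ (Type II) --- that is, for all $k$ when $\alpha\neq-1$, respectively $c\neq-1$, and for all odd $k$ in general --- but in the two exceptional families ($\alpha=-1$ with $k$ even; $c=-1$ with $k$ even) the conclusion genuinely fails, so no direct inspection can complete the proof as you outline it. The honest outcome of your analysis is that the statement needs these cases excluded (or the claim weakened, e.g.\ to odd exponents), and the same criticism applies to the paper's proof, which performs the very division by $\alpha^{k}-1$ and the assertion $c^{k}\neq 1$ without flagging the root-of-unity issue you noticed.
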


\begin{proof}
Clearly, the above statement is true when $\phi$ is the identity map.
Now we will consider separate cases for \textit{Type (i)} and \textit{Type (ii)} endomorphisms. \newline
\textit{Case (i) : $\varphi=\begin{pmatrix} \alpha & \beta \\ 0 & 1 \end{pmatrix} \in \End^{\mathrm{I}}(\BS(1,n))$ for some $\alpha\in \ZZ\left[\tfrac{1}{n}\right] \setminus{\{0\}},\, \beta \in \ZZ\left[\tfrac{1}{n}\right]$ is a non-identity endomorphism.}
    
    We will consider two subcases. \newline

    \textit{Subcase (i) : $\alpha =1.$} \newline
    Since $\varphi$ is a non-identity endomorphism, $\beta \neq 0.$ An easy calculation shows that if $\varphi=\begin{pmatrix} 1 & \beta \\ 0 & 1 \end{pmatrix}$, then $\varphi^k=\begin{pmatrix} 1 & k\beta \\ 0 & 1 \end{pmatrix} $. Let $(\gamma,c) \in \operatorname{Fix}(\varphi^k)$. Then $\begin{pmatrix} 1 & k\beta \\ 0 & 1 \end{pmatrix}(\gamma,c)=(\gamma,c)$, which implies $ k\beta \frac{n^c - 1}{n - 1}=0$. Since $k, \beta \neq 0,$ we have $c=0.$ So $\operatorname{Fix}(\varphi^k)=\left\{(\gamma,0)\middle|\gamma\in \ZZ\left[\tfrac{1}{n}\right]\right\}$, for all $k \in \mathbb{N}$. Thus $\operatorname{Per}(\varphi)=\left\{(\gamma,0)\middle|\gamma\in \ZZ\left[\tfrac{1}{n}\right]\right\}=\operatorname{Fix}(\varphi).$ \newline
   \textit{Subcase (ii) : $\alpha \neq 1.$} \newline
   In this case, using a simple induction argument, one can show that $\varphi^k=\begin{pmatrix} \alpha^k & \beta\frac{\alpha^k - 1}{\alpha - 1} \\ 0 & 1 \end{pmatrix}$. Now if $(\gamma,c) \in \Fix(\varphi^k)$, then $\begin{pmatrix} \alpha^k & \beta\frac{\alpha^k - 1}{\alpha - 1} \\ 0 & 1 \end{pmatrix}(\gamma,c)=(\gamma,c),$ which implies \newline
   $(\alpha^k-1)\gamma=- \frac{n^c - 1}{n - 1}\beta\frac{\alpha^k - 1}{\alpha - 1}$. Since $k \geq 1,$ we have $\gamma=- \frac{\beta}{\alpha - 1}\frac{n^c - 1}{n - 1}$. 

   Note that in the proof of last theorem (case (ii)), we saw that $\left(- \frac{\beta}{\alpha - 1}\frac{n^c - 1}{n - 1},c\right) \in \Fix(\varphi)$. So we have, $\Fix(\varphi^k) \subset \Fix(\varphi)$ for all $k \in \mathbb{N}$ and $\Fix(\varphi) \subset \Fix(\varphi^k)$ always holds for all $k \in \mathbb{Z}.$ Hence, $\Fix(\varphi^k)=\Fix(\varphi)$ for all $k \in \mathbb{N}$, that is $\operatorname{Per}(\varphi)=\operatorname{Fix}(\varphi)$. \newline

   \textit{Case (ii) : $\varphi=\varphi_{(\gamma,c)}$$ \in \End^{\mathrm{II}}(\BS(1,n))$, for some $(\gamma,c)\in \BS(1,n)$.} \newline
   From the composition rule of such endomorphisms described in section \S 2, we note that $\varphi^k=\varphi_{(\frac{n^{c^k}-1}{n^c-1}\gamma,c^k)}$
   Again we will consider two subcases.

   Subcase (i) : c=1 \newline
   Putting $c=1,$ we get $\varphi^k=\varphi_{(\gamma,c)}=\varphi$. Then obviously, $\Fix(\varphi^k)=\Fix(\varphi)$ for all $k \in \mathbb{N}$, that is $\operatorname{Per}(\varphi)=\operatorname{Fix}(\varphi).$

   Subcase (ii) : $c \neq 1$ \newline
   In this case, $c^k \neq 1$. So $\varphi^k = \varphi_{(\frac{n^{c^k}-1}{n^c-1}\gamma,c^k)} \neq \varphi_{(\gamma,1)}.$ Thus by the last theorem, $\Fix\varphi^k=\{(0,0)\}=\Fix\varphi$ for all $k \in \mathbb{N}$. So $\operatorname{Per}(\varphi)=\operatorname{Fix}(\varphi)$.

\end{proof}

\begin{rem}
    In particular, if $\varphi$ is an automorphism of $\BS(1,n),$ then $\operatorname{Fix}(\varphi)=\operatorname{Fix}(\varphi^k), \forall k \geq 1$ and $\operatorname{Per}(\varphi)=\operatorname{Fix}(\varphi)$.
\end{rem}
\noindent Now we talk about stabilizers of elements in $\BS(1,n)$ as a subset of $\End(G)$ as opposed to $\Aut(G)$ in the previous section.
\begin{dfn}

 Let $G$ be a group. For any element $g\in G$, the endo-stabilizer of $g$ denoted by $e\text{-}\stab (g)$ is defined as $e\text{-}\stab (g)=\left\{\varphi\in \End (G)\middle | \varphi(g)=g\right\}.$
 \end{dfn}
 Note that $e\text{-}\stab (g)$ is a submonoid of $\End (G)$. Since  $\End^{\mathrm{I}}(\BS(1,n))$ and $\End^{\mathrm{II}}(\BS(1,n))$ are two disjoint subsets of the monoid $\End(\BS(1,n))$, we have $$e\text{-}\stab(\gamma, c)=\stab^\mathrm{I}(\gamma,c)\bigcup \stab^\mathrm{II}(\gamma,c)$$ where $$\stab^\mathrm{I}(\gamma,c)=\left\{\varphi\in \End^{\mathrm{I}}(\BS(1,n))\middle | \varphi(\gamma,c)=(\gamma,c)\right\}$$ and $$\stab^\mathrm{II}(\gamma,c)=\left\{\varphi\in \End^{\mathrm{II}}(\BS(1,n))\middle | \varphi(\gamma,c)=(\gamma,c)\right\}$$ are disjoint subsets of $\End^{\mathrm{I}}(\BS(1,n))$ and $\End^{\mathrm{II}}(\BS(1,n))$ respectively.

 \begin{dfn}
    Let $H$ be a subgroup of a group $G$. Then stabilizer of $H$ denoted by $e\text{-}\stab(H)$ is defined by the set $\{\varphi \in \End(G) \mid \varphi(h)=h$ for all $h \in H\}.$
\end{dfn}
Note that $e\text{-}\stab(H)$ is a submonoid of $\End(G).$
Note that if $H=\langle g \rangle$, where $g \in G$ then, $e\text{-}\stab(H)=$$\{\varphi \in \End(G) \mid \varphi(g)=g\}$. In this case, we denote $e\text{-}\stab(H)$ as $e\text{-}\stab(g).$ We also observe that $e\text{-}\stab(g)=e\text{-}\stab(g^{-1}).$

 \begin{thm}\label{e-stab}
Let $(\gamma,c)\in \BS(1,n)$ where $\gamma=\frac{l}{n^p}$, $l,p\in \ZZ$ with $p\ge0,l\neq0$ and $\mu=\frac{n^c-1}{n-1}$. Then, $e\text{-}\stab(\gamma,c)$ is an infinitely generated submonoid of $\End(BS(1,n))$. In particular for $c>0$, $\stab^{\mathrm{I}}(\gamma,c)\cong \ZZ\left[\tfrac{1}{n}\right]$ and $\stab^\mathrm{II}(\gamma,c)=\left\{\varphi_{\left({\frac{\gamma}{\mu}},1\right)}\right\}$ if $\mu \mid l$ and $\stab^\mathrm{II}(\gamma,c)= \emptyset$ if $\mu \nmid l$
 \end{thm}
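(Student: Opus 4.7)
My plan is to exploit the decomposition
\[
e\text{-}\stab(\gamma,c)=\stab^{\mathrm{I}}(\gamma,c)\sqcup\stab^{\mathrm{II}}(\gamma,c)
\]
coming from the partition of $\End(\BS(1,n))$ into its two types, and to handle each piece separately. Since $\End^{\mathrm{I}}(\BS(1,n))$ is closed under composition, $\stab^{\mathrm{I}}(\gamma,c)$ is a submonoid of $e\text{-}\stab(\gamma,c)$, so the global infinite-generation claim will follow once $\stab^{\mathrm{I}}(\gamma,c)$ alone is shown to be infinitely generated.

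For the Type~I piece with $c>0$, I would take a generic candidate $\varphi=\begin{pmatrix}\alpha&\beta\\0&1\end{pmatrix}$ with $\alpha\in\mathbb{Z}\!\left[\tfrac{1}{n}\right]\setminus\{0\}$ and $\beta\in\mathbb{Z}\!\left[\tfrac{1}{n}\right]$, and impose $\varphi(\gamma,c)=(\gamma,c)$. Exactly as in~\eqref{eq:2}, this collapses to $(\alpha-1)\gamma+\beta\mu=0$, and $c>0$ makes $\mu\neq 0$, so $\beta=(1-\alpha)\gamma/\mu$ is forced. To characterize which $\alpha$ yield $\beta\in\mathbb{Z}\!\left[\tfrac{1}{n}\right]$, I would set $d=\gcd(l,\mu)$ and write $l=dl'$, $\mu=d\mu'$ with $\gcd(l',\mu')=1=\gcd(\mu',n)$; the requirement then reduces to $\alpha\equiv 1\pmod{\mu'}$ in $\mathbb{Z}\!\left[\tfrac{1}{n}\right]$. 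Parametrising by $\alpha=1+\mu'\delta$ for $\delta\in\mathbb{Z}\!\left[\tfrac{1}{n}\right]$, excluding at most one point to enforce $\alpha\neq 0$, sets up the bijection $\stab^{\mathrm{I}}(\gamma,c)\leftrightarrow\mathbb{Z}\!\left[\tfrac{1}{n}\right]$.

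For the Type~II piece, I would apply the formula from Section~\S2: $\varphi_{(\gamma',c')}(\gamma,c)=(\gamma',c')^{c}=\bigl(\tfrac{n^{c'c}-1}{n^{c'}-1}\gamma',\,c'c\bigr)$ by Lemma~\ref{basic}(ii). Setting this equal to $(\gamma,c)$ with $c>0$ immediately forces $c'=1$, and then $\mu\gamma'=\gamma$, giving $\gamma'=\gamma/\mu$. Since $\gcd(\mu,n)=1$, this value lies in $\mathbb{Z}\!\left[\tfrac{1}{n}\right]$ iff $\mu\mid l$ in $\mathbb{Z}$, which is precisely the asserted dichotomy. The case $c=0$ (needed only for the global infinite-generation statement) is handled by direct inspection: $\mu=0$ forces $\alpha=1$ with $\beta$ free, so $\stab^{\mathrm{I}}(\gamma,0)\cong(\mathbb{Z}\!\left[\tfrac{1}{n}\right],+)$ is evidently infinitely generated, while $\varphi_{(\gamma',c')}(\gamma,0)=(0,0)\neq(\gamma,0)$ rules out any Type~II contributions.

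The hard part will be the \emph{infinite generation} assertion for $c>0$: a bare bijection with $\mathbb{Z}\!\left[\tfrac{1}{n}\right]$ does not preclude finite generation as a monoid, so an additional argument is needed. I would use the fact that composition in $\End^{\mathrm{I}}(\BS(1,n))$ is multiplicative in the $\alpha$-entry: if $\{\varphi_1,\dots,\varphi_s\}\subset\stab^{\mathrm{I}}(\gamma,c)$ were a finite generating set with $\alpha$-entries $\alpha_1,\dots,\alpha_s$, then every element of the resulting submonoid would have $\alpha$-entry supported on the finite set of rational primes dividing $\alpha_1\cdots\alpha_s$. However, Dirichlet's theorem produces infinitely many rational primes $q\nmid n$ with $q\equiv 1\pmod{\mu'}$; each such $q$ belongs to $1+\mu'\mathbb{Z}\!\left[\tfrac{1}{n}\right]$ and thus occurs as the $\alpha$-entry of a genuine element of $\stab^{\mathrm{I}}(\gamma,c)$ whose prime support is not contained in any prescribed finite set, contradicting finite generation.
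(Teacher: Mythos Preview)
Your computation of $\stab^{\mathrm{I}}$ and $\stab^{\mathrm{II}}$ runs parallel to the paper's: both solve~\eqref{eq:2} and set up a bijection with $\mathbb{Z}\!\left[\tfrac{1}{n}\right]$. The paper parametrises by $\beta$ and extracts the $n$-coprime part $A$ of $l$, arriving at $\beta\in\tfrac{A}{\gcd(\mu,A)}\mathbb{Z}\!\left[\tfrac{1}{n}\right]$; you parametrise by $\alpha\in 1+\mu'\mathbb{Z}\!\left[\tfrac{1}{n}\right]$ via $d=\gcd(l,\mu)$. These agree because $\gcd(\mu,n)=1$ forces $\gcd(l,\mu)=\gcd(A,\mu)$. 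Your Type~II and $c=0$ analyses are essentially identical to the paper's.

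Where you go beyond the paper is the Dirichlet argument. The paper stops at the bijection $\stab^{\mathrm{I}}(\gamma,c)\cong\mathbb{Z}\!\left[\tfrac{1}{n}\right]$ and never explains why this forces infinite generation \emph{as a monoid}; you correctly observe that a bare bijection is insufficient and supply the missing step via multiplicativity of the $\alpha$-coordinate together with Dirichlet on the progression $1\pmod{\mu'}$. This genuinely strengthens the argument.

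There is, however, a gap in your reduction ``$\stab^{\mathrm{I}}$ infinitely generated $\Rightarrow$ $e\text{-}\stab$ infinitely generated''. Closure of $\End^{\mathrm{I}}$ under composition only tells you $\stab^{\mathrm{I}}$ is a sub\-monoid; it does not by itself give the implication, since submonoids of finitely generated monoids need not be finitely generated. What you actually need is that $\End^{\mathrm{II}}$ is a two-sided ideal of $\End(\BS(1,n))$: both $\varphi_{\alpha,\beta}\circ\varphi_{(\gamma',c')}$ and $\varphi_{(\gamma',c')}\circ\varphi_{\alpha,\beta}$ send $(1,0)$ to $(0,0)$, hence lie in $\End^{\mathrm{II}}$. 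Consequently any product of endomorphisms landing in $\End^{\mathrm{I}}$ has all factors in $\End^{\mathrm{I}}$, so a finite generating set $S$ for $e\text{-}\stab(\gamma,c)$ would yield $\stab^{\mathrm{I}}(\gamma,c)=\langle S\cap\End^{\mathrm{I}}\rangle$, contradicting your Dirichlet argument. Once you insert this observation, the proof is complete.
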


 \begin{proof} We will consider two cases.
 
 \textit{Case (i):} When $c=0$, equation \ref{eq:2} becomes $(\alpha-1)\gamma=0$ which implies $\alpha=1$ as $\gamma\neq 0$. Then $\stab^{\mathrm{I}}(\gamma,0)=\left\{\begin{pmatrix}
     1 & \beta\\0 &1
 \end{pmatrix}\middle|\beta\in \ZZ\left[\tfrac{1}{n}\right]\right\}\cong \ZZ\left[\tfrac{1}{n}\right]$. If $\varphi\in \stab^{\mathrm{II}}(\gamma,0)$, then $\varphi=\varphi_{(\gamma',c')}$ for some $(\gamma',c')\in \BS(1,n)$ and $\varphi_{(\gamma',c')}(\gamma,0)=(\gamma,0)\implies \left(\frac{n^{0.c'}-1}{n^{c'}-1}\gamma',0.c'\right)=(\gamma,0)$. This implies $\gamma=0$, a contradiction. Therefore, $\stab^{\mathrm{II}}(\gamma,0)=\emptyset$.\\

 \textit{Case (ii):} If $c>0$ and $\varphi\in \stab^\mathrm{I}(\gamma,c) $, then $\varphi\in \End^\mathrm{I}(\BS(1,n))$ and hence $\varphi=\begin{pmatrix}
     \alpha & \beta\\ 0 & 1
 \end{pmatrix}$   for $\alpha,\beta\in \ZZ\left[\tfrac{1}{n}\right]$. Since, $\varphi\in \stab^\mathrm{I}(\gamma,c) $, $\alpha, \beta$ satisfy \ref{eq:2}. As in the case of automorphisms, $\alpha$ uniquely determines $\beta$. Substituting $\mu=\frac{n^c-1}{n-1}$ in \ref{eq:2}, we get, $(\alpha-1)\gamma+\mu\beta=0\implies 1-\alpha=\frac{\mu}{\gamma}\beta$. We need $\alpha\in \ZZ\left[\tfrac{1}{n}\right]$. But $ \alpha\in \ZZ\left[\tfrac{1}{n}\right] \iff 1-\alpha\in  \ZZ\left[\tfrac{1}{n}\right]$, which in turn implies $\frac{\mu}{\gamma}\beta \in \ZZ\left[\tfrac{1}{n}\right]$. Let $\beta=\frac{x}{n^y}$ where $x,y\in \ZZ$ and $y\ge 0$ are unknowns. Substituting all the values in $\frac{\mu}{\gamma}\beta $, we get $\frac{\mu x n^b}{ln^y}$. Since, $n^{b-y}$ is a unit in $\ZZ\left[\tfrac{1}{n}\right]$, $\frac{\mu}{\gamma}\beta \in \ZZ\left[\tfrac{1}{n}\right]\iff \frac{\mu x}{l}\in \ZZ\left[\tfrac{1}{n}\right]$. Let
$l=\prod_{j=1}^{k}p_j^{s_j}A$ where $s_j\ge 0$ and $\gcd(n,A)=1$. Since, $p_j$s are primes appearing in the factorization of $n$, $\prod_{j=1}^{k}p_j^{s_j}$ is an invertible element in $\ZZ\left[\tfrac{1}{n}\right]$. Therefore, $\frac{\mu x}{l}\in \ZZ\left[\tfrac{1}{n}\right]\iff \frac{\mu x}{A}\in \ZZ\left[\tfrac{1}{n}\right]$. Note that $\mu,A\in \ZZ$ such that $\gcd(\mu,n)=1=\gcd(A,n)$ and $x\in \ZZ$ is unknown. Hence, $\frac{\mu x}{A}\in \ZZ\left[\tfrac{1}{n}\right]\iff \frac{\mu x}{A}\in \ZZ$. Then, $x=\frac{kA}{\gcd(\mu,A)}$ for any $k\in \ZZ$ ensures $\alpha\in \ZZ\left[\tfrac{1}{n}\right] $.  Substituting the value of $x$ in $\alpha=1-\frac{\mu}{\gamma}\beta$, we get, $\alpha=1-\frac{\mu}{\gamma}\frac{kA}{\gcd (\mu, A)n^y}$ where $\beta=\frac{kA}{\gcd (\mu, A)n^y}$. It is easy to verify that $\begin{pmatrix}
    1-\frac{\mu}{\gamma}\frac{kA}{\gcd (\mu, A)n^y} & \frac{k A}{\gcd (\mu, A)n^y}\\
    0 &1
\end{pmatrix}$ indeed fixes $(\gamma,c)$ for all values of $k,y\in \ZZ$ with $y\ge 0$. Hence we have $$\stab^\mathrm{I}(\gamma,c)=\left\{\begin{pmatrix}
    1-\frac{\mu}{\gamma}\frac{kA}{\gcd (\mu, A)n^y} & \frac{kA}{\gcd (\mu, A)n^y}\\
    0 &1
\end{pmatrix}\middle |k\in \ZZ, y\ge0\right\}\cong \frac{A}{\gcd(\mu, A)}\ZZ\left[\tfrac{1}{n}\right]\cong \ZZ\left[\tfrac{1}{n}\right].$$ Let us move on to the computation of $\stab^\mathrm{II}(\gamma,c)$. If $\varphi\in \stab^\mathrm{II}(\gamma,c)$, $\varphi\in \End^{\mathrm{I}}(\BS(1,n)) $ and $\varphi(\gamma,c)=(\gamma,c)$. That is, $\varphi=\varphi_{(\gamma',c')}$ for some $(\gamma',c')\in \BS(1,n) $ and $\varphi_{(\gamma',c')}(\gamma,c)=(\gamma,c)\implies \left(\frac{n^{cc'}-1}{n^{c'}-1}\gamma,cc'\right)=(\gamma,c)\implies c'=1$ and $\frac{n^{cc'}-1}{n^{c'}-1}\gamma'=\gamma$. Let $\frac{n^{c}-1}{n-1}=\mu$, then $\gamma'=\frac{\gamma}{\mu}=\frac{l}{n^p\mu}$ must belong to $\ZZ\left[\tfrac{1}{n}\right]$ for $\varphi_{(\gamma',c')}$ to belong to $\stab^\mathrm{II}(\gamma,c)$. As, $n^p$ is invertible in $\ZZ\left[\tfrac{1}{n}\right]$ and $\gcd(\mu,n)=1$, $\frac{l}{n^p\mu}\in \ZZ\left[\tfrac{1}{n}\right] \iff \frac{l}{\mu}\in \ZZ\left[\tfrac{1}{n}\right] $. Hence, $\stab^\mathrm{II}(\gamma,c)\neq \emptyset\iff \mu \mid l$. If $\mu \mid l$, $\stab^\mathrm{II}(\gamma,c)=\left\{\varphi_{\left({\frac{\gamma}{\mu}},1\right)}\right\}$  and $\stab^\mathrm{II}(\gamma,c)= \emptyset$ if $\mu \nmid l$. 
 \end{proof}

\begin{cor}\label{stabH}
    Let $H$ be a finitely generated subgroup of $\BS(1,n)$. Then $e\text{-}\stab(H)$ is either trivial or an infinitely generated submonoid of $\End(\BS(1,n))$
\end{cor}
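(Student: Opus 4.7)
The plan is to reduce Corollary~\ref{stabH} to the case-analysis already completed in Lemmas and Theorems above, by splitting on the structure of $H$ afforded by Lemma~\ref{subgroups}. That lemma classifies the finitely generated subgroups of $\BS(1,n)$ as either infinite cyclic or of finite index, so the whole argument reduces to handling these two cases and invoking Theorem~\ref{e-fix} and Theorem~\ref{e-stab} respectively.

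First I would dispose of the finite-index case. Suppose $H$ has finite index in $\BS(1,n)$ and take any $\varphi \in e\text{-}\stab(H)$. Then by definition $H \leq \Fix(\varphi)$, so $\Fix(\varphi)$ also has finite index in $\BS(1,n)$. By Theorem~\ref{e-fix}, the fixed subgroup of a non-identity endomorphism is isomorphic to $\ZZ$, to $\ZZ[\tfrac{1}{n}]$, or is trivial. None of these can be of finite index in $\BS(1,n)$: a finite index subgroup of $\BS(1,n)$ is a finitely generated group of exponential growth, while $\ZZ$ and $\ZZ[\tfrac{1}{n}]$ are abelian (and the latter is not even finitely generated), and the trivial group is clearly not finite index. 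Hence $\Fix(\varphi) = \BS(1,n)$, forcing $\varphi = \id$, and $e\text{-}\stab(H)$ is trivial.

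Second, if $H = \langle g \rangle$ is infinite cyclic for some $g \in \BS(1,n)$, then by definition $e\text{-}\stab(H) = e\text{-}\stab(g)$. Writing $g = (\gamma,c)$ with $\gamma \in \ZZ[\tfrac{1}{n}]$ and $c \in \ZZ$, we may assume $c \geq 0$ using $e\text{-}\stab(g) = e\text{-}\stab(g^{-1})$. Theorem~\ref{e-stab} then shows that $e\text{-}\stab(\gamma,c)$ contains $\stab^{\mathrm{I}}(\gamma,c) \cong \ZZ[\tfrac{1}{n}]$ (both in the case $c=0$ with $\gamma \neq 0$, and in the case $c>0$), which is already infinitely generated; the set $\stab^{\mathrm{II}}(\gamma,c)$ is at most a singleton and does not affect this conclusion. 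Thus $e\text{-}\stab(H)$ is infinitely generated.

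There isn't really a serious obstacle here, since all the content sits in the two theorems being cited. The only point that needs a bit of care is verifying that $\ZZ$ and $\ZZ[\tfrac{1}{n}]$ cannot be finite index subgroups of $\BS(1,n)$, which is transparent from the semidirect product structure $\BS(1,n) \cong \ZZ[\tfrac{1}{n}] \rtimes \ZZ$ and the fact that $\ZZ[\tfrac{1}{n}]$ is not finitely generated.
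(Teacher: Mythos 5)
Your proposal is correct and follows essentially the same route as the paper: split via Lemma~\ref{subgroups} into the finite-index case (where Theorem~\ref{e-fix} forces $\Fix(\varphi)=\BS(1,n)$ and hence $\varphi=\id$) and the infinite cyclic case (where Theorem~\ref{e-stab} gives the infinitely generated copy of $\ZZ\left[\tfrac{1}{n}\right]$ inside $\stab^{\mathrm{I}}(\gamma,c)$). Your explicit justification that $\ZZ$ and $\ZZ\left[\tfrac{1}{n}\right]$ cannot be finite index is a small welcome addition that the paper merely asserts.
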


\begin{proof}
    Let $H$ be a finitely generated subgroup of $\BS(1,n)$. Then by Lemma ~\ref{subgroups}, there are two cases to consider. \\

    \textit{Case (i) : $H$ is finite index in $\BS(1,n).$} \newline 
    The proof is same as that in the case of automorphisms. We write it for completion.
    Let $\varphi \in e\text{-}\stab(H),$ then $H \leq \Fix(\varphi) \leq \BS(1,n).$ Since $H$ is finite index in $\BS(1,n)$, $\Fix(\varphi)$ is also finite index in $\BS(1,n).$ But by Theorem ~\ref{e-fix}, we have that either trivial or $\Fix\varphi\cong\ZZ$ or $\Fix\varphi\cong \ZZ \left[\tfrac{1}{n}\right]$, none of which is finite index in $\BS(1,n).$ So $\Fix(\varphi)=\BS(1,n)$, which implies that $\varphi$ is the identity endomorphism. Hence, in this case, $e\text{-}\stab(H)$ is trivial. \\

    \textit{Case (ii) : $H=\langle g\rangle$ is infinite cyclic, where $g \in \BS(1,n).$} \newline
    In this case, $e\text{-}\stab(H)=e\text{-}\stab(g)$ and by Theorem ~\ref{e-stab},  it is an infinitely generated submonoid of $\End (\BS(1,n)$. 
    
\end{proof}

As in the case of automorphisms, we can define, the endo-fixed closure of $H$ in $G$ \cite{RV}, 
denoted $e\text{-}\mathrm{Cl}(H)$, is the subgroup of $G$ defined by
\[
e\text{-}\mathrm{Cl}(H)
= \Fix(e\text{-}\stab(H))
= \bigcap_{\substack{\varphi \in \End(G) \\ H \le \Fix \varphi}}
\Fix \varphi .
\]
Analogous to theorem \ref{afix}, we have the following theorem for endo-fixed closures.
\begin{thm}
    Let $H$ be a finitely generated subgroup of $\BS(1,n)$. Then $e\text{-}\mathrm{Cl}(H)=\BS(1,n)$ or $e\text{-}\mathrm{Cl}(H)\cong \mathbb{Z}[\frac{1}{n}]$ or $e\text{-}\mathrm{Cl}(H)\cong \mathbb{Z}$.
\end{thm}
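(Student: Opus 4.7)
The plan is to parallel the case analysis of Theorem \ref{afix}, leveraging Lemma \ref{subgroups} together with the basic inclusion $\stab(H) \subseteq e\text{-}\stab(H)$ as submonoids of $\End(\BS(1,n))$. This inclusion immediately yields
\[
H \;\subseteq\; e\text{-}\mathrm{Cl}(H) \;\subseteq\; \mathrm{Cl}(H),
\]
which lets us import upper bounds directly from the automorphism case.

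If $H$ has finite index in $\BS(1,n)$, then by Corollary \ref{stabH} the submonoid $e\text{-}\stab(H)$ is trivial, so $e\text{-}\mathrm{Cl}(H) = \Fix(\mathrm{Id}) = \BS(1,n)$. Otherwise, by Lemma \ref{subgroups}, $H = \langle (\gamma, c) \rangle$ is infinite cyclic; since $e\text{-}\stab(g) = e\text{-}\stab(g^{-1})$, we may assume $c \geq 0$.

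When $c = 0$, Theorem \ref{e-stab} gives $e\text{-}\stab(\gamma, 0) = \stab^{\mathrm{I}}(\gamma, 0) = \left\{\begin{pmatrix} 1 & \beta \\ 0 & 1 \end{pmatrix} : \beta \in \ZZ\left[\tfrac{1}{n}\right]\right\}$, which coincides with the automorphism stabilizer $\stab(\gamma, 0)$. Hence $e\text{-}\mathrm{Cl}(H) = \mathrm{Cl}(H) \cong \ZZ\left[\tfrac{1}{n}\right]$ by Theorem \ref{afix}. When $c \neq 0$, Theorem \ref{stab} places $\stab(H)$ in the rank-$r$ finitely generated abelian case of Theorem \ref{afix}, so $\mathrm{Cl}(H) \cong \ZZ$; the sandwich above then exhibits $e\text{-}\mathrm{Cl}(H)$ as a subgroup of $\ZZ$ containing the nontrivial $H$, forcing $e\text{-}\mathrm{Cl}(H) \cong \ZZ$.

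No serious obstacle is anticipated: the theorem is essentially a corollary of Theorem \ref{afix} combined with the structural results already proved for $e\text{-}\stab$. The only verification needed is that these three subcases exhaust all possibilities for nontrivial finitely generated $H$, which is immediate from Lemma \ref{subgroups} and Theorem \ref{stab}; the rest of the work is absorbed into the sandwich, and no independent computation of $\Fix$ of an infinite family of endomorphisms is required.
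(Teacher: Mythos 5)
Your proposal is correct, and in the two easy cases (finite index, and cyclic generated by $(\gamma,0)$) it coincides with the paper's argument: triviality of $e\text{-}\stab(H)$ from Corollary \ref{stabH}, and the identification $e\text{-}\stab(\gamma,0)=\stab(\gamma,0)=\left\{\left(\begin{smallmatrix}1&\beta\\0&1\end{smallmatrix}\right)\right\}$ coming from $\stab^{\mathrm{II}}(\gamma,0)=\emptyset$. Where you genuinely diverge is the main case $H=\langle(\gamma,c)\rangle$ with $c\neq 0$: the paper works inside the endomorphism framework, showing via the explicit form of $\stab^{\mathrm{I}}(\gamma,c)$ that no non-trivial $\varphi\in e\text{-}\stab(\gamma,c)$ can have $\Fix\varphi\cong\ZZ\left[\tfrac{1}{n}\right]$, so every such $\Fix\varphi\cong\ZZ$ and the intersection containing $\langle(\gamma,c)\rangle$ is again $\ZZ$; you instead use the inclusion $\stab(H)\subseteq e\text{-}\stab(H)$ to get the sandwich $H\subseteq e\text{-}\mathrm{Cl}(H)\subseteq \mathrm{Cl}(H)$, quote Theorem \ref{afix} to get $\mathrm{Cl}(H)\cong\ZZ$, and conclude that $e\text{-}\mathrm{Cl}(H)$ is a non-trivial subgroup of an infinite cyclic group. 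Your route is shorter and avoids any computation with Type I or Type II endo-stabilizers in this case, but it leans on the auto-fixed closure theorem as a black box; the paper's direct argument is self-contained in the endomorphism setting and yields the additional information that every non-trivial element of $e\text{-}\stab(\gamma,c)$ has infinite cyclic fixed subgroup. Both arguments (like the paper's) silently exclude the degenerate case $H=\{1\}$, for which $e\text{-}\mathrm{Cl}(H)$ would be trivial, but that gap is inherited from Lemma \ref{subgroups} as stated and is not specific to your proof.
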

\begin{proof}
We will consider two cases.

    \textit{Case (i) : When $H$ is finite index, by corollary \ref{stabH}, $e\text{-}\stab(H)$ is trivial.}
    In this case, \[
e\text{-}\mathrm{Cl}(H)
= \Fix(e\text{-}\stab(H))
=\Fix(\text{Id})
=\BS(1,n).
\] \\
\textit{Case (ii)} : When $H$ is infinite cyclic, by corollary \ref{stabH}, $e\text{-}\stab(H)=e\text{-}\stab(g)$ for some $g\in \BS(1,n)$

Subcase (i) : $g=(\gamma,c),$ where $c=0.$
In this case, \[
e\text{-}\mathrm{Cl}(H)
= \Fix(e\text{-}\stab(\gamma,0))= \Fix\left(\left\{\begin{pmatrix}
     1 & \beta\\0 &1
 \end{pmatrix}\middle|\beta\in  \ZZ\left[\tfrac{1}{n}\right]\right\}\right) \] by proof of case (i) in \ref{e-stab}
\[ =\left\{(\gamma,0)\middle|\gamma\in \ZZ\left[\tfrac{1}{n}\right]\right\}\cong \ZZ\left[\tfrac{1}{n}\right] \] by the proof of case (i) in \ref{fixed}.

Subcase (ii) : $g=(\gamma,c),$ where $c \neq 0.$
 \[
e\text{-}\mathrm{Cl}(H)
= \Fix((e\text{-}\stab(\gamma,c))= \bigcap_{\substack{\varphi \in e\text{-}\stab(\gamma,c)}}
\Fix \varphi .
\]

Note that if $\varphi \in e\text{-}\stab(\gamma,c)$ is non-trivial, then $\Fix(\varphi) \ncong \mathbb{Z}[\frac{1}{n}].$ Because if $\Fix(\varphi) \cong \mathbb{Z}[\frac{1}{n}]$ and $\varphi \in e\text{-}\stab(\gamma,c)$, then by Theorem \ref{e-fix}, $\varphi \in \stab^{\mathrm{I}}(\BS(1,n))=\left\{\begin{pmatrix}
    1-\frac{\mu}{\gamma}\frac{kA}{\gcd (\mu, A)n^y} & \frac{kA}{\gcd (\mu, A)n^y}\\
    0 &1
\end{pmatrix}\middle |k\in \ZZ, y\ge0\right\}$ and $\varphi = \begin{pmatrix}
     1 & \beta\\0 &1
 \end{pmatrix}$. This implies $\frac{\mu}{\gamma}\frac{kA}{\gcd (\mu, A)n^y}=0$. Now $c \neq 0$ implies $\frac{\mu}{\gamma} \neq 0.$ So $\frac{kA}{\gcd (\mu, A)n^y}=0$. So $\varphi$ must be $\begin{pmatrix}
     1 & 0\\0 &1
 \end{pmatrix},$ a contradiction. 

Now, since $ \langle (\gamma,c) \rangle \leq \Fix(\varphi)$ for all $\varphi \in e\text{-}\stab(\gamma,c)$. So $\mathbb{Z} \leq \Fix(\varphi)$ and $\Fix(\varphi) \ncong \mathbb{Z}[\frac{1}{n}]$ for all $\varphi \in e\text{-}\stab(\gamma,c)$. This implies $e\text{-}\mathrm{Cl}(H)=\bigcap_{\substack{\varphi \in e\text{-}\stab(\gamma,c)}}
\Fix \varphi \cong \mathbb{Z}.$

\end{proof}

\begin{thm}
 For any $(\gamma,c)\in \BS(1,n)$, $e\text{-}\stab(\gamma,c)=e\text{-}\stab(\gamma,c)^k$ for all $k\in \ZZ$.   
\end{thm}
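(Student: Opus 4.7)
The plan is to adapt the strategy of Theorem~\ref{stab} (the automorphism analogue) but to handle the two types of endomorphisms separately, exploiting the disjoint decomposition $e\text{-}\stab(\gamma,c) = \stab^{\mathrm{I}}(\gamma,c) \sqcup \stab^{\mathrm{II}}(\gamma,c)$. I will work with $k \neq 0$ (the case $k=0$ being a trivial edge case) and use $e\text{-}\stab(g) = e\text{-}\stab(g^{-1})$, which holds because any endomorphism is a group homomorphism, to reduce to $k \geq 1$. The forward inclusion $e\text{-}\stab(\gamma,c) \subseteq e\text{-}\stab((\gamma,c)^k)$ is immediate from $\varphi(g^k) = \varphi(g)^k$, so all the real work lies in the reverse inclusion.

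For the reverse inclusion, I split on $c$. When $c = 0$, Lemma~\ref{basic} gives $(\gamma,0)^k = (k\gamma, 0)$; since Case~(i) of Theorem~\ref{e-stab} describes $\stab^{\mathrm{I}}(\gamma,0)$ and $\stab^{\mathrm{II}}(\gamma,0)$ in a way that depends only on $\gamma$ being nonzero (not on its specific value), the stabilizers of $(\gamma,0)$ and of $(k\gamma,0)$ coincide, as both $\gamma$ and $k\gamma$ are nonzero.

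When $c \neq 0$, so that $ck \neq 0$, I take $\varphi \in e\text{-}\stab((\gamma,c)^k)$ and argue by type. For $\varphi = \begin{pmatrix}\alpha & \beta \\ 0 & 1\end{pmatrix} \in \stab^{\mathrm{I}}((\gamma,c)^k)$, expanding the matrix action on $(\gamma,c)^k = \left(\gamma\,\tfrac{n^{ck}-1}{n^c-1},\, ck\right)$ yields $(\alpha-1)\gamma\,\tfrac{n^{ck}-1}{n^c-1} + \beta\,\tfrac{n^{ck}-1}{n-1} = 0$. Since $n^{ck}-1 \neq 0$, I cancel this common factor and multiply through by $n^c - 1$ to recover exactly equation~\eqref{eq:2}, which is precisely the condition $\varphi(\gamma,c) = (\gamma,c)$; hence $\varphi \in \stab^{\mathrm{I}}(\gamma,c)$.

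For $\varphi = \varphi_{(\gamma',c')} \in \stab^{\mathrm{II}}((\gamma,c)^k)$, the essential observation, which follows by writing $(\delta, d) = (0,d)\cdot(\delta, 0)$ and using $\varphi_{(\gamma',c')}(1,0) = (0,0)$, is that $\varphi_{(\gamma',c')}(\delta, d) = (\gamma',c')^d$ depends only on $d$. Applied to $(\gamma,c)^k$, the fixed-point condition becomes $(\gamma',c')^{ck} = (\gamma,c)^k$; comparing second coordinates via Lemma~\ref{basic} forces $c'ck = ck$ and therefore $c' = 1$, and then comparing first coordinates forces $\gamma' = \gamma/\mu$ with $\mu = \tfrac{n^c-1}{n-1}$. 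A direct check then gives $(\gamma',1)^c = (\mu\gamma', c) = (\gamma, c)$, so $\varphi \in \stab^{\mathrm{II}}(\gamma,c)$. I expect the main delicacy to be in the Type~II calculation, where one must carefully exploit that the action depends solely on the second coordinate in order not to get tangled in the semidirect-product arithmetic; once this observation is in hand, the Type~II step reduces to a clean comparison of coordinates parallel to the Type~I case.
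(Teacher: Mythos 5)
Your proposal is correct and follows essentially the same route as the paper: reduce to $k\geq 1$ via $e\text{-}\stab(g)=e\text{-}\stab(g^{-1})$, note the trivial forward inclusion, and handle the two types separately, with the Type~I case recovering equation~\eqref{eq:2} by cancelling $n^{ck}-1$ and the Type~II case forcing $c'=1$ and $\gamma'=\gamma/\mu$. The only difference is cosmetic: where the paper cites the computation from the automorphism power theorem and the divisibility characterization $\mu_k\mid l_k$ (with $l_k/\mu_k=l/\mu$) from Theorem~\ref{e-stab}, you inline those same calculations directly.
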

\begin{proof}
    Since $e\text{-}\stab(g)=e\text{-}\stab (g^{-1})$, it is enough to prove the theorem for $k>0$. Now $e\text{-}\stab(\gamma,c)=\stab^\mathrm{I}(\gamma,c)\bigcup \stab^\mathrm{II}(\gamma,c)$. By the computation in the proof of Theorem \ref{stab}, we can conclude that $\stab^\mathrm{I}(\gamma,c)=\stab^\mathrm{I}(\gamma,c)^k$ for all $k\in \ZZ$. When $c=0$, $\stab^\mathrm{II}(\gamma,c)=\emptyset$. Hence, for $(\gamma,0)$, we have $e\text{-}\stab(\gamma,0)=e\text{-}\stab(\gamma,0)^k$ for all $k\in \ZZ$. Also, note that $e\text{-}\stab(\gamma,c)\subseteq e\text{-}\stab(\gamma,c)^k$ for all $k\in \ZZ$. Let $\gamma=\frac{l}{n^p}$ where $l,p\in \ZZ$ with $p>0$ and $\mu=\frac{n^c-1}{n-1}$. Now, suppose $\varphi\in \stab^\mathrm{II}(\gamma,c)^k$. We know from Lemma \ref{basic} that $(\gamma,c)^k=\left(\frac{n^{ck}-1}{n^c-1}\gamma,ck\right)$. Then by  Theorem \ref{e-stab}, we have $\mu_k=\frac{n^{ck}-1}{n-1}$ divides $l_k=\frac{n^{ck}-1}{n^c-1}l$, that is $\frac{l_k}{\mu_k}\in \ZZ$ and $\stab^\mathrm{II}(\gamma,c)^k=\left\{\varphi_{\left(\frac{l_k}{\mu_k},1\right)}\right\}$. But, $\frac{l_k}{\mu_k}=\frac{l}{\mu}\in \ZZ$ and hence $\stab^\mathrm{II}(\gamma,c)=\left\{\varphi_{\left(\frac{l}{\mu},1\right)}\right\}$. But, $\varphi_{\left(\frac{l_k}{\mu_k},1\right)}=\varphi_{\left(\frac{l}{\mu},1\right)}$ and therefore $\stab^\mathrm{II}(\gamma,c)=\stab^\mathrm{II}(\gamma,c)^k$ for all $k>0$. Hence, the theorem.
\end{proof}

\section*{Acknowledgements}
\noindent The authors would like to thank Prof. Amritanshu Prasad, Prof. Parameswaran Sankaran, Prof. Enric Ventura for their valuable comments and suggestions. The first author also thanks IMSc for its hospitality during her visits, when the major part of this collaboration was carried out; she is grateful to the INSPIRE Faculty Fellowship of the
Department of Science and Technology (DST), Government of India.

\end{document}